\newtheorem{theorem}{Theorem}[section]
\newtheorem{corollary}[theorem]{Corollary}
\newtheorem{proposition}[theorem]{Proposition}
\theoremstyle{definition}
\newtheorem{question}{Question}
\theoremstyle{remark}
\numberwithin{equation}{section}
\def\Span{\operatorname{span}}
\def\alt{\operatorname{alt}}
\def\dalt{\operatorname{dalt}}
\def\warp{\operatorname{warp}}
\def\OS{Ozsv\'ath-Szab\'o}
\begin{document}

\title{Alternating distances of knots and links}

\author{Adam M. Lowrance}
\address{Department of Mathematics\\
Vassar College\\
Poughkeepsie, NY} 
\email{adlowrance@vassar.edu}
\thanks{The author was supported by an AMS-Simons Travel Grant.}

\subjclass{}
\date{}

\begin{abstract}
An alternating distance is a link invariant that measures how far away a link is from alternating. We study several alternating distances and demonstrate that there exist families of links for which the difference between certain alternating distances is arbitrarily large. We also show that two alternating distances, the alternation number and the alternating genus, are not comparable.
\end{abstract}

\maketitle

\section{Introduction}

Alternating links play an important role in knot theory and $3$-manifold geometry and  topology. Link invariants are often easier to compute and take on special forms for alternating links. Moreover, the complements of alternating links have interesting topological and geometric structures. Many generalizations of alternating links exist, and a particular generalization can give rise to an invariant that measures how far a link is from alternating. We study several such invariants, which we call alternating distances. 

A link $L$ is {\em split} if it has a separating sphere, i.e. a two-sphere $S^2$ in $S^3$ such that $L$ and $S^2$ are disjoint and each component of $S^3-S^2$ contains at least one component of $L$. We will mostly be concerned with non-split links, that is links with no separating spheres.
A real valued link invariant $d(L)$ is an {\em alternating distance} if it satisfies the following conditions.
\begin{enumerate}
\item For any non-split link, $d(L)\geq 0$.
\item For any non-split link, $d(L)=0$ if and only if $L$ is alternating.
\item If $L_1$ and $L_2$ are non-split links and $L_1\#L_2$ is any connected sum of $L_1$ and $L_2$, then $d(L_1\# L_2) \leq d(L_1) + d(L_2)$.
\end{enumerate}
The connected sum $L_1\#L_2$ depends on a choice of components in $L_1$ and $L_2$. However, condition (3) above must be true for any choice of connected sum. We will frequently use the notation $L_1\# L_2$ to denote an arbitrary choice of connected sum.

We consider the following invariants. The dealternating number, denoted $\dalt(L)$, and the alternation number, denoted $\alt(L)$, are defined by counting crossing changes. The relationship between the minimum crossing number $c(L)$ of a link $L$ and the span of the Jones polynomial $V_L(t)$ of $L$ was used to prove some of Tait's famous conjectures, and we study the difference $c(L)-\Span V_L(t)$. The Turaev genus, denoted $g_T(L)$, and the alternating genus, denoted $g_{\alt}(L)$, are the genera of certain surfaces associated to $L$. The warping span, denoted $\warp(K)$, is defined by examining the over-under behavior as one travels along the knot or link. Precise definitions of these invariants are given in Section \ref{section:definitions}.

Let $d_1$ and $d_2$ be real valued link invariants and let $\mathcal{F}$ be a family of links. We say $d_2$ {\em dominates} $d_1$ on $\mathcal{F}$, and write $d_1(\mathcal{F})\ll d_2(\mathcal{F})$, if for each positive integer $n$, there exists a link $L_n\in\mathcal{F}$ such that $d_2(L_n) - d_1(L_n)
\geq n$. 

In Section \ref{section:examples}, we examine three families of links. The first family $\mathcal{F}(W_n)$ consists of iterated Whitehead doubles of the figure-eight knot. The second family $\mathcal{F} (\widetilde{T}(p,q))$ consists of links obtained by changing certain crossings of torus links. The third family $\mathcal{F}(T(3,q))$ consists of the $(3,q)$-torus knots. 
\begin{theorem} 
\label{theorem:main1}
Let $\mathcal{F}(W_n)$, $\mathcal{F}(\widetilde{T}(p,q))$, and $\mathcal{F}(T(3,q))$ be the families of links above.
\begin{enumerate}
\item The dealternating number, $c(L)-\Span V_L(t)$, and Turaev genus dominate the alternation number on $\mathcal{F}(W_n)$.
\item The dealternating number, Turaev genus, $c(L) - \Span V_L(t)$, and alternation number dominate the alternating genus on $\mathcal{F}(\widetilde{T}(p,q))$.
\item The dealternating number, alternation number, $c(L)-\Span V_L(t)$, and Turaev genus dominate the warping span on $\mathcal{F}(T(3,q))$.
\item The difference $c(L)-\Span V_L(t)$ dominates the dealternating number, alternation number, alternating genus, and Turaev genus on $\mathcal{F}(T(3,q))$.
\end{enumerate}
\end{theorem}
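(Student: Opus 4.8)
The plan is to prove the four parts of Theorem~\ref{theorem:main1} by studying three explicit families of links and computing the relevant invariants (or useful bounds on them) for each. Since the theorem is an accumulation of domination statements, my strategy would be to isolate, for each family, one invariant that stays \emph{small} (often bounded by a constant or by a slowly growing quantity) and contrast it with invariants that grow linearly in the family parameter. The key tool throughout is the observation that a \emph{difference} $d_2(L_n)-d_1(L_n)$ grows without bound precisely when $d_2$ has a lower bound tending to infinity while $d_1$ has an upper bound growing more slowly.

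\medskip

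\noindent\textbf{Parts (1)--(3): bounding one invariant from above while the others grow.}
For the Whitehead-double family $\mathcal{F}(W_n)$ in part~(1), I would first argue that the alternation number stays bounded (Whitehead doubles can be unknotted, and thus rendered alternating, by relatively few crossing changes independent of the iteration depth, so $\alt(W_n)$ is controlled). Then I would produce \emph{lower} bounds on the dealternating number, on $c(L)-\Span V_L(t)$, and on the Turaev genus that increase with $n$; the Turaev genus bound is the natural anchor here, since $g_T(L)\le\dalt(L)$ and $g_T$ also bounds $c(L)-\Span V_L(t)$ from below, so a single linear-growth estimate on $g_T(W_n)$ can drive all three dominations simultaneously. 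For the modified torus links $\mathcal{F}(\widetilde{T}(p,q))$ in part~(2), the alternating genus should be the small invariant (these links should bound a genuinely low-genus alternating surface after the crossing changes), while the dealternating number, Turaev genus, $c(L)-\Span V_L(t)$, and alternation number all grow; I would again lean on a Turaev-genus or Jones-span lower bound as the engine. Part~(3), the torus knots $T(3,q)$, is the reverse: here the warping span should be small (even constant), while the remaining four invariants grow in $q$, and I would use known spectral-sequence or signature-type lower bounds specific to $(3,q)$-torus knots.

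\medskip

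\noindent\textbf{Part (4): showing $c(L)-\Span V_L(t)$ dominates the other four on $\mathcal{F}(T(3,q))$.}
This is the most delicate part and where I expect the main obstacle. Here the roles reverse relative to part~(3): I must show $c(L)-\Span V_L(t)$ grows \emph{faster} than each of the dealternating number, alternation number, alternating genus, and Turaev genus on the \emph{same} family $T(3,q)$. The plan is first to compute $\Span V_{T(3,q)}(t)$ exactly from the known Jones polynomial of torus knots, and to use the fact that the standard $(3,q)$ diagram is minimal so that $c(T(3,q))$ is known explicitly; subtracting gives a clean linear-in-$q$ formula for $c-\Span V$. Then I must bound the four competing invariants from \emph{above} by a quantity growing strictly more slowly (ideally a fixed fraction of the $c-\Span V$ growth, or a lower-order term). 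The hard part will be the upper bounds on dealternating number and Turaev genus for $T(3,q)$: I would look for an efficient unknotting/dealternating sequence or an explicit low-genus Turaev surface built from the braid structure of the $3$-strand torus braid, and I expect the bookkeeping comparing the exact $c-\Span V$ slope against these upper bounds to be the crux. Once both the exact lower bound on $c-\Span V$ and the upper bounds on the four invariants are in hand, subtracting yields differences $\ge n$ for suitable $q$, completing the domination.

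\medskip

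Throughout, I would rely on the subadditivity and vanishing-characterization axioms~(1)--(3) only indirectly; the real content is the pair of inequalities (an increasing lower bound on the dominating invariant, a slower upper bound on the dominated one) for each specific family, and assembling known values of the Jones polynomial, Turaev genus, and signature for torus and Whitehead-double links.
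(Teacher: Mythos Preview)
Your plan is correct and matches the paper's approach: for each family you correctly identify which invariant stays bounded (respectively $\alt$, $g_{\alt}$, $\warp$) and which ones grow, and you correctly recognize that a single lower bound on $g_T$ propagates via $g_T\le\dalt$ and $g_T\le c-\Span V$. The paper executes exactly this outline, supplying the specific technical inputs you leave implicit: Hedden's knot Floer computation gives $w(\widehat{HFK}(W_n))=n+1$ and hence $g_T(W_n)\ge n$; the bound $|s+\sigma|\le 2\alt,\,2g_T$ together with torus-knot signature estimates drives the growth for $\widetilde T(p,q)$ (whose toroidally alternating projection comes directly from the torus embedding, not from crossing changes); and for $T(3,q)$ the exact values $c-\Span V=q-1$, $g_T=\dalt=\lfloor q/3\rfloor$, $\warp=\tfrac12$ are already in the literature.
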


Two real valued link invariants $d_1$ and $d_2$ are said to be {\em comparable} if either $d_1(L)\leq d_2(L)$ or $d_2(L)\leq d_1(L)$ for all links $L$. The invariants $d_1$ and $d_2$ are not comparable if there exists links $L$ and $L'$ such that $d_1(L)<d_2(L)$ and $d_1(L')>d_2(L')$.
\begin{theorem}
\label{theorem:main2}
The alternation number and alternating genus of a link are not comparable.
\end{theorem}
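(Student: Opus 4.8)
To prove non-comparability I must produce a link $L$ with $\alt(L) < g_{\alt}(L)$ together with a link $L'$ with $\alt(L') > g_{\alt}(L')$. The two links are found by completely different means, and only one of them requires real work.

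The link $L'$ is immediate from Theorem \ref{theorem:main1}(2): on the family $\mathcal{F}(\widetilde{T}(p,q))$ the alternation number dominates the alternating genus, so taking $n=1$ yields a link $L'\in\mathcal{F}(\widetilde{T}(p,q))$ with $\alt(L')-g_{\alt}(L')\ge 1$, and in particular $\alt(L')>g_{\alt}(L')$. No further argument is needed for this half.

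For $L$ I would instead exhibit a single knot $K$ that is one crossing change from alternating yet needs a high-genus surface to be realized alternatingly, namely a knot with $\alt(K)=1$ and $g_{\alt}(K)\ge 2$. The upper bound $\alt(K)\le 1$ is the routine direction: I would display a diagram of $K$ and one crossing whose reversal produces an alternating link, and since $K$ is itself non-alternating this forces $\alt(K)=1$. Everything rests on the lower bound $g_{\alt}(K)\ge 2$, that is, on showing that $K$ is neither alternating (no alternating diagram on the sphere) nor toroidally alternating (no alternating, cellularly embedded diagram on a genus-one Heegaard surface of $S^3$).

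Establishing $g_{\alt}(K)\ge 2$ is the main obstacle, since excluding alternating diagrams on the torus cannot be done by any soft count. I would derive it from a lower bound for the alternating genus of the kind recorded in Section \ref{section:definitions} --- for instance one extracted from the breadth of $V_K(t)$ through a Kauffman--Murasugi--Thistlethwaite-type estimate for alternating diagrams on surfaces, or one coming from the complexity of the double branched cover $\Sigma_2(K)$, which is forced to be simple when $g_{\alt}(K)$ is small. Choosing $K$ so that this invariant is already too large to be compatible with $g_{\alt}(K)\le 1$ then yields $g_{\alt}(K)\ge 2$ while leaving the computation $\alt(K)=1$ untouched. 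The pair $(K,L')$ realizes the two required strict inequalities, so $\alt$ and $g_{\alt}$ are not comparable.
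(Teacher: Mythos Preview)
Your handling of the $L'$ direction is fine and matches the paper: Theorem~\ref{theorem:main1}(2) immediately supplies links with $\alt>g_{\alt}$.

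The $L$ direction, however, is not a proof but a plan with its load-bearing step left unspecified. You correctly identify that the task is to produce a non-alternating knot $K$ with $\alt(K)=1$ and $g_{\alt}(K)\ge 2$, and that the second inequality is the hard part. But the mechanisms you propose for forcing $g_{\alt}(K)\ge 2$ --- a Kauffman--Murasugi--Thistlethwaite span estimate for surface-alternating diagrams, or an obstruction from the double branched cover --- are not results established anywhere in this paper (and the section you cite contains only definitions, not bounds). No such lower bound for $g_{\alt}$ is available here, so as written your argument has no way to exclude $g_{\alt}(K)=1$.

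The paper closes this gap with a completely different obstruction, Proposition~\ref{prop:hyperbolic}: a prime knot with $g_{\alt}=1$ must be a torus knot or hyperbolic. The knots $W_n$ (iterated untwisted Whitehead doubles of the figure-eight) are prime, genus-one satellite knots, hence neither torus nor hyperbolic, so $g_{\alt}(W_n)\ne 1$; they are non-alternating (e.g.\ by the knot Floer width computation), so $g_{\alt}(W_n)>1$. Meanwhile $\alt(W_n)=1$ since changing a clasp crossing unknots any Whitehead double. This is exactly Proposition~\ref{prop:Whitehead}. Your outline would become a proof once you replace the speculative Jones/branched-cover bounds by this geometric obstruction and name the knot.
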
 

This paper is organized as follows. In Section \ref{section:definitions}, we define the invariants mentioned in Theorems \ref{theorem:main1} and \ref{theorem:main2}. In Section \ref{section:bounds}, we describe some lower bounds for the invariants. In Section \ref{section:examples}, we define several families of links and use them to prove Theorems \ref{theorem:main1} and \ref{theorem:main2}. In Section \ref{section:questions}, we discuss some open questions about our invariants.

{\bf Acknowledgement:} The author is grateful for comments from Oliver Dasbach, Radmila Sazdanovi\'c, and Alexander Zupan.

\section{The invariants}
\label{section:definitions}
In this section, the invariants of Theorems \ref{theorem:main1} and \ref{theorem:main2} are defined. We show that each one is an alternating distance, and discuss some known relationships between them.

\subsection{Dealternating number}

Adams et al. \@ \cite{Adams:Almost} define {\em almost alternating links} to be non-alternating links with a diagram such that one crossing change makes the diagram alternating. They use the notion of number of crossing changes needed to make a diagram alternating to define the dealternating number of a link. The {\em dealternating number} of a link diagram $D$, denoted $\dalt(D)$, is the minimum number of crossing changes necessary to transform $D$ into an alternating diagram. The {\em dealternating number} of a link $L$, denoted $\dalt(L)$, is the minimum dealternating number of any diagram of $L$. A link with dealternating number $k$ is also called $k$-almost alternating.

\begin{proposition}
The dealternating number of a link is an alternating distance.
\end{proposition}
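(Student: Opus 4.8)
The plan is to check the three defining properties of an alternating distance in turn, directly from the definition of $\dalt$; properties (1) and (2) will be immediate, and property (3) carries all the content.

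For property (1), I would first observe that $\dalt(L)$ is a well-defined non-negative integer: any diagram of $L$ can be converted into an alternating diagram by some finite sequence of crossing changes (traverse the diagram and switch crossings as needed so that over- and under-passes alternate), so $\dalt(L)$ is the minimum of a nonempty set of non-negative integers. Property (2) is then essentially definitional. If $\dalt(L) = 0$, then some diagram of $L$ needs no crossing changes to be alternating, i.e.\ $L$ has an alternating diagram and so is alternating; conversely, if $L$ is alternating it has an alternating diagram $D$ with $\dalt(D) = 0$, whence $\dalt(L) = 0$. Both implications hold for every link, in particular for non-split links.

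The substance is property (3). Given a connected sum $L_1 \# L_2$ specified by a choice of components, I would pick diagrams $D_1$ of $L_1$ and $D_2$ of $L_2$ that realize $\dalt(L_1)$ and $\dalt(L_2)$. After isotoping so that the chosen component of each meets the unbounded region, I would splice an arc of $D_1$ to an arc of $D_2$ to produce a diagram $D$ of the specified connected sum $L_1 \# L_2$, noting that no new crossings are introduced at the junction. Performing the $\dalt(L_1)$ crossing changes prescribed on the $D_1$ part together with the $\dalt(L_2)$ prescribed on the $D_2$ part --- a total of $\dalt(L_1) + \dalt(L_2)$ crossing changes on $D$ --- turns $D$ into a connected sum of two alternating diagrams. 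Granting that such a connected sum is alternating, I conclude $\dalt(L_1 \# L_2) \leq \dalt(D) \leq \dalt(L_1) + \dalt(L_2)$.

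The main obstacle is precisely the claim that a connected sum of alternating diagrams can be taken to be alternating, and that this can be arranged for the given connected sum. Since the splicing arc carries no crossings, the alternating pattern cannot be repaired there and so must match across the junction on its own. I would verify this by a traversal argument: in an alternating diagram the two passes at the ends of any edge have opposite over/under type, so on cutting an edge of each summand one end is an over-pass and the other an under-pass, and attaching the band so that the over-end of one summand joins the under-end of the other makes both junctions alternate, while the interiors of the two summands already alternate. Equivalently, one can phrase this using checkerboard colorings, choosing the coloring of each summand so that their unbounded regions agree. I would take care that the correct band attachment realizes the specified connected sum, so that the inequality holds for any choice of connected sum, as the definition of an alternating distance requires.
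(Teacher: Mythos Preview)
Your proof is correct and follows essentially the same approach as the paper's. The paper's argument is terser---it simply asserts that for any connected sum $L_1\#L_2$ one can choose a diagrammatic connected sum $D_1\#D_2$ with $\dalt(D_1\#D_2)=\dalt(D_1)+\dalt(D_2)$---while your traversal/checkerboard discussion of matching over- and under-ends at the splice is exactly the justification the paper leaves implicit.
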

\begin{proof}
The definition of the dealternating number implies that it is always non-negative and equals zero if and only if the link is alternating. Suppose that $L_1$ and $L_2$ are links with diagrams $D_1$ and $D_2$ such that $\dalt(D_1)=\dalt(L_1)$ and $\dalt(D_2)=\dalt(L_2)$. For any choice of connected sum $L_1\#L_2$, there exists some choice $D_1\# D_2$ of connected sum of diagrams $D_1$ and $D_2$ such that $D_1\# D_2$ is a diagram of $L_1\# L_2$ and $\dalt(D_1\#D_2)=\dalt(D_1)+\dalt(D_2)=\dalt(L_1)+\dalt(L_2)$. Thus $\dalt(L_1\#L_2)\leq \dalt(L_1)+\dalt(L_2)$ and hence the dealternating number of a link is an alternating distance.
\end{proof}

\subsection{Alternation number}
Kawauchi \cite{Kawauchi:Alternation} uses crossing changes in a slightly different manner to define the alternation number of a link. The {\em alternation number} of a link diagram $D$, denoted alt$(D)$, is the minimum number of crossing changes necessary to transform $D$ into some (possibly non-alternating) diagram of an alternating link. The {\em alternation number} of a link $L$, denoted alt$(L)$, is the minimum alternation number of any diagram of $L$. The alternation number of $L$ is also the Gordian distance from $L$ to the set of alternating links \cite{Murakami:Metrics}. It is immediate from their definitions that 
\begin{equation}
\label{eq::altdalt}
\alt(L)\leq\dalt(L)
\end{equation}
for any link $L$.

\begin{proposition}
The alternation number of a link is an alternating distance.
\end{proposition}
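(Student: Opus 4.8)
The plan is to follow exactly the three-part structure of the definition of an alternating distance, mirroring the proof just given for the dealternating number. The first two conditions are essentially immediate from the definition of $\alt(L)$: the alternation number counts a minimum number of crossing changes, so it is automatically non-negative, and it equals zero precisely when some diagram of $L$ is already a diagram of an alternating link, i.e.\ when $L$ itself is alternating. I would dispatch these two conditions in a single sentence, noting that the ``only if'' direction of condition (2) uses the fact that if no crossing changes are needed then $L$ is a diagram of an alternating link, hence $L$ is alternating.

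The substance of the proof is condition (3), the subadditivity under connected sum. Here I would argue just as in the dealternating case. First choose diagrams $D_1$ and $D_2$ realizing the alternation numbers, so $\alt(D_i)=\alt(L_i)$. By definition there are sets of $\alt(L_i)$ crossing changes transforming $D_i$ into a diagram $D_i'$ of some alternating link $L_i'$. For any prescribed choice of connected sum $L_1\#L_2$, I would form a corresponding connected sum of diagrams $D_1\#D_2$ that is a diagram of $L_1\#L_2$, and perform on it the union of the two sets of crossing changes. This yields a diagram $D_1'\#D_2'$, a connected sum of the alternating diagrams $D_1'$ and $D_2'$.

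The one genuinely new point compared with the dealternating proof—and the step I expect to be the main obstacle—is that after the crossing changes we land on a diagram of $L_1'\#L_2'$ which is a \emph{connected sum of alternating links}, and I must know this is again an alternating link rather than merely that the diagram has small dealternating number. So I would invoke the classical fact that the connected sum of two alternating links is alternating: a connected sum of two alternating diagrams can be rearranged into an alternating diagram (this follows from Menasco--Thistlethwaite-type reasoning, or simply by choosing the connect-sum arcs at the outermost regions so the checkerboard colorings match up). Granting this, $L_1'\#L_2'$ is alternating, so the total number of crossing changes we used, namely $\alt(L_1)+\alt(L_2)$, is an upper bound for $\alt(L_1\#L_2)$. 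Hence
\begin{equation*}
\alt(L_1\#L_2)\leq \alt(L_1)+\alt(L_2),
\end{equation*}
which establishes condition (3) and completes the proof that the alternation number is an alternating distance. A slicker alternative would be to deduce subadditivity of $\alt$ directly from subadditivity of $\dalt$ together with inequality \eqref{eq::altdalt}, but that does not work in one line because $\alt(L_1\#L_2)\leq \dalt(L_1\#L_2)$ bounds $\alt$ above by the \emph{dealternating} numbers, not the alternation numbers, so the self-contained crossing-change argument above is the cleaner route.
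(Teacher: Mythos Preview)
Your proof is correct and follows essentially the same argument as the paper's: perform the two sets of crossing changes on a connected-sum diagram $D_1\#D_2$ to obtain a diagram of a connected sum of alternating links, and use that such a connected sum is itself alternating. One small wording slip worth fixing: the $D_i'$ are diagrams \emph{of} alternating links but need not themselves be alternating diagrams (that is precisely the difference between $\alt$ and $\dalt$); you correctly state the needed fact at the level of links rather than diagrams, so the argument is unaffected.
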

\begin{proof}
The definition of the alternation number implies that it is always non-negative and equals zero if and only if the link is alternating. Suppose that $L_1$ and $L_2$ are links with diagrams $D_1$ and $D_2$ respectively such that $\alt(D_1)=\alt(L_1)$ and $\alt(D_2)=\alt(L_2)$. Let $\widetilde{D}_1$ and $\widetilde{D}_2$ be the diagrams of alternating links obtained from $D_1$ and $D_2$ respectively via the minimum number of crossing changes. Let $L_1\#L_2$ be a connected sum of $L_1$ and $L_2$, and let $D_1\# D_2$ be a connected sum of $D_1$ and $D_2$ such that $D_1\#D_2$ is a diagram of $L_1\#L_2$. Then $D_1\#D_2$ can be transformed into $\widetilde{D}_1\#\widetilde{D}_2$ (an alternating link) via $\alt(D_1)+\alt(D_2)=\alt(L_1)+\alt(L_2)$ crossing changes. Hence $\alt(L_1\#L_2)\leq \alt(L_1)+\alt(L_2)$, and thus the alternation number is an alternating distance.
\end{proof}

\subsection{Crossing number and the span of the Jones polynomial}
In the late 19\textsuperscript{th} century, Tait \cite{Tait:Conjecture} conjectured that a certain type of alternating link diagram (called reduced) has minimal crossing number among all diagrams for that link. This conjecture remained undecided until the discovery of the Jones polynomial \cite{Jones1}, when combined work of Kauffman \cite{Kauffman2} and Murasugi \cite{Murasugi} proved the conjecture to be true. 

Let $V_L(t)$ denote the Jones polynomial of $L$, and let $\max \deg V_L(t)$ and $\min \deg V_L(t)$ denote the maximum and minimum power of $t$ in $V_L(t)$ with non-zero coefficient respectively. Define $\Span V_L(t) = \max \deg V_L(t) - \min \deg V_L(t)$. Let $c(L)$ denote the minimum number of crossings in any diagram of the link $L$. 
\begin{proposition}
The difference $c(L)-\Span V_L(t)$ is an alternating distance.
\end{proposition}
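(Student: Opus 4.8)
The plan is to verify the three defining conditions of an alternating distance for $d(L) = c(L) - \Span V_L(t)$. The first two conditions follow immediately from the classical Kauffman-Murasugi-Thistlethwaite theorem, which I would invoke as the key external input. That theorem states that for any non-split link $L$ one has $\Span V_L(t) \leq c(L)$, with equality if and only if $L$ is alternating (more precisely, equality holds exactly when $L$ admits a reduced alternating diagram). This gives condition (1), namely $d(L) \geq 0$, and condition (2), namely $d(L) = 0$ if and only if $L$ is alternating, in one stroke.

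The substantive work is condition (3), subadditivity under connected sum: $d(L_1 \# L_2) \leq d(L_1) + d(L_2)$. First I would rewrite this as
\begin{equation}
c(L_1 \# L_2) - \Span V_{L_1 \# L_2}(t) \leq \big(c(L_1) - \Span V_{L_1}(t)\big) + \big(c(L_2) - \Span V_{L_2}(t)\big).
\end{equation}
The plan is to handle the crossing number term and the Jones polynomial term separately using two well-known behaviors of these quantities under connected sum. For the Jones polynomial, it is a classical fact that $V_{L_1 \# L_2}(t) = V_{L_1}(t) \cdot V_{L_2}(t)$, so the spans add exactly: $\Span V_{L_1 \# L_2}(t) = \Span V_{L_1}(t) + \Span V_{L_2}(t)$. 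For the crossing number, one always has the subadditivity $c(L_1 \# L_2) \leq c(L_1) + c(L_2)$, obtained simply by placing minimal diagrams of $L_1$ and $L_2$ side by side and forming the connected sum, which uses no additional crossings.

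Combining these two facts finishes the argument: substituting $\Span V_{L_1 \# L_2}(t) = \Span V_{L_1}(t) + \Span V_{L_2}(t)$ and $c(L_1 \# L_2) \leq c(L_1) + c(L_2)$ into the left-hand side yields exactly the desired inequality, since the span terms cancel in pairs and the crossing-number inequality supplies the rest. I expect the main conceptual obstacle to be purely one of citation and care rather than computation: one must be sure that the equality case of the Kauffman-Murasugi-Thistlethwaite bound is stated correctly for the "if and only if" in condition (2), and one must confirm that the crossing number bound is genuinely an inequality (it is in fact conjectured and known in many cases to be an equality, but only the inequality $c(L_1\#L_2) \leq c(L_1)+c(L_2)$ is needed here and is elementary). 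Because the span is additive while the crossing number is only subadditive, the difference $d$ is subadditive, which is precisely what condition (3) demands.
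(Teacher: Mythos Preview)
Your proposal is correct and follows essentially the same approach as the paper's own proof: invoke the Kauffman--Murasugi (and Thistlethwaite) result for conditions (1) and (2), then use additivity of the span of the Jones polynomial together with sub-additivity of crossing number under connected sum to obtain condition (3). The paper also explicitly notes, as you do, that additivity of crossing number is open but only the easy sub-additivity inequality is needed here.
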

\begin{proof}
Murasugi \cite{Murasugi} and Kaufman \cite{Kauffman2} prove that $\Span V_L(t) \leq c(L)$ for any non-split link $L$. Furthermore, when $L$ is non-split, Murasugi proves that $\Span V_L(t) = c(L)$ if and only if $L$ is a connected sum of alternating links, which happens if and only if $L$ is alternating. The span of the Jones polynomial is additive under connected sums, i.e. $\Span V_{L_1\#L_2}(t) = \Span V_{L_1}(t) + \Span V_{L_2}(t)$. It is a long-standing open question whether crossing number is additive, but it is easy to see that crossing number is sub-additive, i.e. $c(L_1\# L_2) \leq c(L_1) + c(L_2)$.
Therefore $c(L_1\#L_2) - \Span V_{L_1\#L_2}(t) \leq (c(L_1) - \Span V_{L_1}(t)) + (c(L_2)-\Span V_{L_2}(t)).$ Hence $c(L)-\Span V_L(t)$ is an alternating distance.
\end{proof}

\subsection{Turaev genus}
Turaev \cite{Turaev:SimpleProof} gives a simplified proof of Tait's conjecture mentioned above, where he associates to each link diagram $D$ a surface $F(D)$, now known as the {\em Turaev surface of $D$}. Each crossing of $D$ can be resolved in either an $A$-resolution or a $B$-resolution, as depicted in Figure \ref{figure:smoothing}.
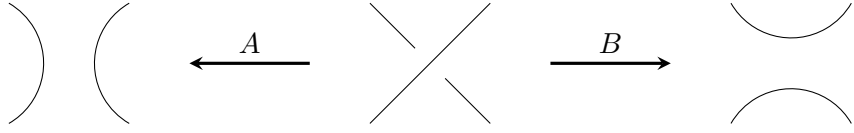
\begin{figure}[h]
$$\begin{tikzpicture}[>=stealth, scale=.8]
\draw (-1,-1) -- (1,1);
\draw (-1,1) -- (-.25,.25);
\draw (.25,-.25) -- (1,-1);
\draw (-3,0) node[above]{$A$};
\draw[->,very thick] (-2,0) -- (-4,0);
\draw (3,0) node[above]{$B$};
\draw[->,very thick] (2,0) -- (4,0);
\draw (-5,1) arc (120:240:1.1547cm);
\draw (-7,-1) arc (-60:60:1.1547cm);
\draw (5,1) arc (210:330:1.1547cm);
\draw (7,-1) arc (30:150:1.1547cm);
\end{tikzpicture}$$
\caption{The $A$ and $B$ resolutions of a crossing}
\label{figure:smoothing}
\end{figure}
A collection $s$ of simple closed curves obtained by choosing either an $A$-resolution or a $B$-resolution for each crossing of $D$ is a {\em state} of $D$. The state $s_A(D)$ obtained by choosing an $A$-resolution at each crossing is called the {\em all-$A$ state} of $D$, and similarly, the state $s_B(D)$ obtained by choosing a $B$-resolution at each crossing is called the {\em all-$B$ state} of $D$. 

Let $\Gamma$ be the $4$-valent graph obtained from $D$ by forgetting the ``over-under" information at each crossing. Regard $\Gamma$ as embedded in $S^2$, and thicken the sphere to $S^2\times[-1,1]$ such that $\Gamma$ is a subset of $S^2\times \{0\}$. We first describe the intersection of the Turaev surface $F(D)$ and $S^2\times[-1,1]$. Outside of neighborhoods of the vertices of $\Gamma$, the Turaev surface intersects $S^2\times[-1,1]$ in $\Gamma\times [-1,1]$. In a neighborhood of each crossing, the Turaev surface intersects $S^2\times[-1,1]$ in a saddle positioned so that $F(D)\cap S^2\times\{-1\}$ is the all-$B$ state $s_B(D)$ and $F(D)\cap S^2\times\{1\}$ is the all-$A$ state $s_A(D)$, as depicted in Figure \ref{figure:saddle}. 
\begin{figure}[h]
$$\begin{tikzpicture}
\begin{scope}[thick]
\draw [rounded corners = 10mm] (0,0) -- (3,1.5) -- (6,0);
\draw (0,0) -- (0,1);
\draw (6,0) -- (6,1);
\draw [rounded corners = 5mm] (0,1) -- (2.5, 2.25) -- (0.5, 3.25);
\draw [rounded corners = 5mm] (6,1) -- (3.5, 2.25) -- (5.5,3.25);
\draw [rounded corners = 5mm] (0,.5) -- (3,2) -- (6,.5);
\draw [rounded corners = 7mm] (2.23, 2.3) -- (3,1.6) -- (3.77,2.3);
\draw (0.5,3.25) -- (0.5, 2.25);
\draw (5.5,3.25) -- (5.5, 2.25);
\end{scope}

\begin{pgfonlayer}{background2}
\fill [blue!30!white]  [rounded corners = 10 mm] (0,0) -- (3,1.5) -- (6,0) -- (6,1) -- (3,2) -- (0,1); 
\fill [blue!30!white] (6,0) -- (6,1) -- (3.9,2.05) -- (4,1);
\fill [blue!30!white] (0,0) -- (0,1) -- (2.1,2.05) -- (2,1);
\fill [blue!30!white] (2.23,2.28) --(3.77,2.28) -- (3.77,1.5) -- (2.23,1.5);

\fill [white, rounded corners = 7mm] (2.23,2.3) -- (3,1.6) -- (3.77,2.3);
\fill [blue!30!white] (2,2) -- (2.3,2.21) -- (2.2, 1.5) -- (2,1.5);
\fill [blue!30!white] (4,2) -- (3.7, 2.21) -- (3.8,1.5) -- (4,1.5);
\end{pgfonlayer}

\begin{pgfonlayer}{background4}
\fill [blue!90!red] (.5,3.25) -- (.5,2.25) -- (3,1.25) -- (2.4,2.2);
\fill [rounded corners = 5mm, blue!90!red] (0.5,3.25) -- (2.5,2.25) -- (2,2);
\fill [blue!90!red] (5.5,3.25) -- (5.5,2.25) -- (3,1.25) -- (3.6,2.2);
\fill [rounded corners = 5mm, blue!90!red] (5.5, 3.25) -- (3.5,2.25) -- (4,2);
\end{pgfonlayer}

\draw [thick] (0.5,2.25) -- (1.6,1.81);
\draw [thick] (5.5,2.25) -- (4.4,1.81);
\draw [thick] (0.5,2.75) -- (2.1,2.08);
\draw [thick] (5.5,2.75) -- (3.9,2.08);

\begin{pgfonlayer}{background}
\draw [black!50!white, rounded corners = 8mm, thick] (0.5, 2.25) -- (3,1.25) -- (5.5,2.25);
\draw [black!50!white, rounded corners = 7mm, thick] (2.13,2.07) -- (3,1.7)  -- (3.87,2.07);
\end{pgfonlayer}
\draw [thick, dashed, rounded corners = 2mm] (3,1.85) -- (2.8,1.6) -- (2.8,1.24);
\draw (0,0.5) node[left]{$\Gamma$};
\draw (1.5,3) node{$s_A$};
\draw (4.5,3) node{$s_A$};
\draw (3.8,.8) node{$s_B$};
\draw (5.3, 1.85) node{$s_B$};
\end{tikzpicture}$$
\caption{In a neighborhood of each vertex of $\Gamma$ a saddle surface transitions between the all-$A$ and all-$B$ states.}
\label{figure:saddle}
\end{figure}
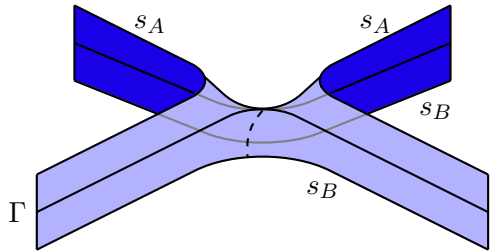
Summarizing, the intersection of the Turaev surface with $S^2\times[-1,1]$ is a cobordism between $s_A(D)$ and $s_B(D)$ whose saddle points correspond to the crossings of $D$. Outside of $S^2\times[-1,1]$ the Turaev surface $F(D)$ is a collection of disks capping off the components of $s_A(D)$ and $s_B(D)$. 

The Turaev surface $F(D)$ is always oriented, and if $D$ is a diagram of a non-split link $L$, then $F(D)$ is connected. The {\em Turaev genus $g_T(D)$} of a diagram $D$ is defined to be the genus of the Turaev surface $F(D)$. It can be shown that 
\begin{equation}
\label{equation:Turaevgenus}
g_T(D) = \frac{1}{2}(2 + c(D) -|s_A(D)| - |s_B(D)|),
\end{equation}
where $c(D)$ is the number of crossings of $D$, and $|s_A(D)|$ and $|s_B(D)|$ denote the number of components in the all-$A$ and all-$B$ states of $D$. The Turaev genus $g_T(L)$ of a non-split link $L$ is the minimum genus of the Turaev surface of $D$ where $D$ is any diagram of $L$, i.e.
$$g_T(L) = \min\{g_T(D)~|~D~\text{is a diagram of}~L\}.$$

A closed, oriented surface $\Sigma\subset S^3$ is a Heegaard surface if both components of $S^3-\Sigma$ are handlebodies. If $\Sigma$ is a Heegaard surface in $S^3$, then every link $L$ has an isotopy class representative that lies within a thickened neighborhood $\Sigma\times[-1,1]$ of $\Sigma$. Let $\pi:\Sigma\times[-1,1]\to\Sigma$ be projection onto the first factor. The link $L$ is said to have an {\em alternating projection} to $\Sigma$ if $\pi(L)$ has only transverse double points and the crossings of the projection $\pi(L)$ alternate as one travels along each component of $L$. 

Dasbach et al. \@ \cite{DFKLS:KauffmanDessins} prove the following facts about the Turaev surface $F(D)$ and Turaev genus of a non-split link.
\begin{itemize}
\item The Turaev surface $F(D)$ is a Heegaard surface in $S^3$.
\item The link $L$ has an alternating projection $\pi$ to $F(D)$.
\item The complement $F(D) - \pi(L)$ of the projection $\pi$ is a disjoint union of disks.
\item The Turaev surface of an alternating diagram is a sphere, and $g_T(L)=0$ if and only if $L$ is alternating.
\end{itemize}

\begin{proposition}
The Turaev genus of a link is an alternating distance.
\end{proposition}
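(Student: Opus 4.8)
The plan is to verify the three defining conditions of an alternating distance in turn. Conditions (1) and (2) are essentially immediate. Since $g_T(L)$ is the genus of a surface, it is automatically a non-negative integer, which gives (1); and condition (2) is exactly the last bullet point quoted from Dasbach et al., namely that the Turaev surface of an alternating diagram is a sphere and $g_T(L)=0$ if and only if $L$ is alternating. So the real content of the proof is condition (3), subadditivity under connected sum.

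For (3), I would fix diagrams $D_1$ and $D_2$ of $L_1$ and $L_2$ realizing the Turaev genus, so that $g_T(D_i)=g_T(L_i)$, and then show that any connected sum $L_1\#L_2$ admits a diagram $D_1\#D_2$, itself a connected sum of $D_1$ and $D_2$, with $g_T(D_1\#D_2)=g_T(D_1)+g_T(D_2)$. The key is to track how the three quantities $c(D)$, $|s_A(D)|$, and $|s_B(D)|$ appearing in the genus formula (\ref{equation:Turaevgenus}) behave under the connected sum operation. The connected sum is performed by joining an arc of $D_1$ to an arc of $D_2$ in a region containing no crossings, so no new crossings are created and $c(D_1\#D_2)=c(D_1)+c(D_2)$. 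For the state counts, the arc along which the sum is formed lies on a single state circle in each summand; forming the connected sum bands these two circles together into one, so exactly one circle is lost in each of the all-$A$ and all-$B$ states. This gives $|s_A(D_1\#D_2)|=|s_A(D_1)|+|s_A(D_2)|-1$ and, identically, $|s_B(D_1\#D_2)|=|s_B(D_1)|+|s_B(D_2)|-1$.

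Substituting these three relations into (\ref{equation:Turaevgenus}), the two constant terms combine so that the $-1$'s cancel the extra $+2$, and one obtains $g_T(D_1\#D_2)=g_T(D_1)+g_T(D_2)$, i.e. exact additivity at the diagram level. Since $D_1\#D_2$ is a diagram of $L_1\#L_2$, minimality of the Turaev genus over all diagrams then yields $g_T(L_1\#L_2)\leq g_T(D_1\#D_2)=g_T(L_1)+g_T(L_2)$, which is condition (3).

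The main obstacle, such as it is, is the bookkeeping for the state counts: one must check carefully that the band used to form the connected sum joins two \emph{distinct} state circles, one from each summand, rather than splitting a single circle, so that the count genuinely drops by one in each state. Because the two summand diagrams are disjoint before the sum is formed, the two connecting arcs lie on circles belonging to different diagrams and are therefore automatically distinct, and this holds for \emph{every} choice of connecting arcs, i.e. for any choice of connected sum $L_1\#L_2$. The remainder is a routine substitution into the genus formula.
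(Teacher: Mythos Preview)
Your proposal is correct and follows essentially the same approach as the paper: verify non-negativity and the alternating characterization by citing known facts, then use the state-count identities $|s_A(D_1\#D_2)|=|s_A(D_1)|+|s_A(D_2)|-1$ and $|s_B(D_1\#D_2)|=|s_B(D_1)|+|s_B(D_2)|-1$ together with Equation~\ref{equation:Turaevgenus} to get additivity at the diagram level and hence subadditivity at the link level. Your explanation of why the state counts drop by exactly one is more detailed than the paper's, but the argument is the same.
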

\begin{proof}
Since Turaev genus is a minimum genus of a surface, it is always non-negative. As mentioned above, a link $L$ has Turaev genus zero if and only if it is alternating. If $D_1$ and $D_2$ are link diagrams and $D_1\# D_2$ is any connected sum, then $|s_A(D_1\#D_2)| = |s_A(D_1)| + |s_A(D_2)| -1$ and $|s_B(D_1\#D_2)| = |s_B(D_1)| + |s_B(D_2)| -1$. Then Equation \ref{equation:Turaevgenus} implies that $g_T(D_1\#D_2) = g_T(D_1)+g_T(D_2)$. Hence if $L_1$ and $L_2$ are links and $L_1\#L_2$ is any connected sum, then $g_T(L_1\#L_2)\leq g_T(L_1)+g_T(L_2).$ Therefore, the Turaev genus of a link is an alternating distance.
\end{proof}

Turaev \cite{Turaev:SimpleProof} shows that $g_T(D)\leq c(D) - \Span V_L(t)$ for any diagram $D$ of the link $L$. Minimizing over all diagrams of the link $L$, one obtains
\begin{equation}
\label{ineq:TuraevJones}
g_T(L) \leq c(L) - \Span V_L(t).
\end{equation}
Abe and Kishimoto \cite{Abe:Dealternating} examine the behavior of the Turaev surface under crossing changes to show that $g_T(D)\leq \dalt(D)$ for any link diagram $D$. Consequently, for any link $L$,
\begin{equation}
\label{ineq:TuraevDalt}
g_T(L)\leq \dalt(L).
\end{equation}
Champanerkar and Kofman's recent survey \cite{CK:TuraevSurvey} gives many open questions concerning the Turaev genus of a link.

\subsection{Alternating genus}

Following his work on almost alternating links, Adams \cite{Adams:ToroidallyAlternating} defined {\em toroidally alternating links} as those links $L$ that have an alternating projection $\pi$ to a Heegaard torus $\Sigma$ such that the complement of the projection in the Heegaard torus, i.e. $\Sigma - \pi(L)$, is a disjoint union of disks. Toroidally alternating links can be naturally generalized. Define the {\em alternating genus} of a non-split link $L$, denoted $g_{\alt}(L)$, to be the minimum genus of any Heegaard surface $\Sigma$ such that the link $L$ has an alternating projection $\pi:S^3\to \Sigma$ and such that $\Sigma-\pi(L)$ is a disjoint union of disks. For any diagram $D$ of $L$, the Turaev surface $F(D)$ is an example of such a surface, and so $g_{\alt}(L)$ is well-defined, and 
\begin{equation}
\label{Inequality:TuraevAltGenus}
g_{\alt}(L)\leq g_T(L)
\end{equation}
for any link $L$.

\begin{proposition}
The alternating genus of a link is an alternating distance.
\end{proposition}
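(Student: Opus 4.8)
The plan is to verify the three defining conditions of an alternating distance in turn, the subadditivity under connected sum being the substantive point. Conditions (1) and (2) are quick. Since $g_{\alt}(L)$ is by definition a minimum genus of a surface, it is non-negative, which gives (1). For (2), if $L$ is alternating then a connected alternating diagram of $L$ is exactly an alternating projection of $L$ to the genus-zero Heegaard surface $S^2$ whose complementary regions are disks, so $g_{\alt}(L)=0$. Conversely, a genus-zero Heegaard surface of $S^3$ is a two-sphere, so if $g_{\alt}(L)=0$ then $L$ has an alternating projection to $S^2$; this projection, together with the over-under information recorded by the embedding of $L$ in $S^2\times[-1,1]$, is an alternating diagram of $L$, whence $L$ is alternating. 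This establishes (2).

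For condition (3), fix non-split links $L_1$ and $L_2$ and a connected sum $L_1\#L_2$. I would choose Heegaard surfaces $\Sigma_1$ and $\Sigma_2$ realizing $g_{\alt}(L_1)$ and $g_{\alt}(L_2)$; thus $L_i$ has an alternating projection $\pi_i$ to $\Sigma_i$ with $\Sigma_i-\pi_i(L_i)$ a union of disks, and $\Sigma_i$ has genus $g_{\alt}(L_i)$. I then form the connected sum surface $\Sigma=\Sigma_1\#\Sigma_2$ as follows. On the component of $L_i$ used in the connected sum, pick a point $x_i$ lying on an edge of the projection (away from every crossing), and let $B_i$ be a small ball about a point of $L_i$ over $x_i$ chosen so that $B_i\cap\Sigma_i$ is a disk $D_i$ and $B_i\cap L_i$ is a trivial arc $\alpha_i$ projecting to a short arc through $x_i$. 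Deleting the balls and gluing $\partial B_1$ to $\partial B_2$ produces $S^3$ again, glues $\Sigma_1-D_1$ to $\Sigma_2-D_2$ along $\partial D_1=\partial D_2$ to form $\Sigma$, and joins $L_1-\alpha_1$ to $L_2-\alpha_2$ to form the prescribed $L_1\#L_2$. A boundary-connected-sum argument applied to the two handlebodies bounded by each $\Sigma_i$ (the disk $D_i$ splits $B_i$ into two half-balls, one in each handlebody) shows that $\Sigma$ is a Heegaard surface of $S^3$ of genus $g(\Sigma_1)+g(\Sigma_2)=g_{\alt}(L_1)+g_{\alt}(L_2)$.

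It remains to check that the induced projection of $L_1\#L_2$ to $\Sigma$ is an alternating projection with disk complement; this yields $g_{\alt}(L_1\#L_2)\le g(\Sigma)=g_{\alt}(L_1)+g_{\alt}(L_2)$ and finishes the proof. Because the $x_i$ lie on edges, the connecting band introduces no new crossings, so the crossings of the new projection are exactly those of $\pi_1(L_1)$ and $\pi_2(L_2)$. For the complementary regions, the edge through $x_i$ splits the disk $D_i$ into two halves lying in two adjacent regions of $\Sigma_i-\pi_i(L_i)$; after the gluing these four half-disk regions pair up and merge into two disks, while all other complementary regions are unchanged, so $\Sigma-\pi(L_1\#L_2)$ is again a union of disks. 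The main obstacle is the alternating condition: the merged component passes through the junction sphere twice, and one must ensure the over-under pattern continues to alternate at both junction points. Here I would exploit the freedom in the construction, namely the choice of the edges $x_1,x_2$ and the choice of which of the two ways to identify the two arcs of $\partial D_1$ with those of $\partial D_2$, to match the parity, exactly as in the classical fact that a connected sum of alternating diagrams can be arranged to be alternating. A short parity bookkeeping at the two junction points shows that the two junction constraints are equivalent and that one of the two gluings always makes all consecutive crossings along the merged component alternate, completing the verification that $g_{\alt}$ is an alternating distance.
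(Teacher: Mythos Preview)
Your proof is correct and follows essentially the same approach as the paper: verify the easy conditions (1) and (2), then for (3) take Heegaard surfaces realizing $g_{\alt}(L_1)$ and $g_{\alt}(L_2)$ and form their connected sum along small disks meeting the projections in single arcs to produce an alternating projection of $L_1\#L_2$ with disk complement on a Heegaard surface of genus $g_{\alt}(L_1)+g_{\alt}(L_2)$. In fact your write-up is more careful than the paper's, which simply asserts the existence of the appropriate disks and the resulting alternating projection without spelling out the Heegaard property of $\Sigma_1\#\Sigma_2$, the disk-complement check, or the parity argument for alternation that you supply.
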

\begin{proof}
By definition, a non-split link $L$ has alternating genus zero if and only if it has an alternating projection to a sphere, i.e. if and only if $L$ is alternating. Moreover, since alternating genus is the minimum genus of some surface, it is always non-negative.  Let $L_1\#L_2$ be a connected sum of links $L_1$ and $L_2$ such that both $L_1$ and $L_2$ have an alternating projections $\pi_1$ and $\pi_2$ to respective Heegaard surfaces $\Sigma_1$ and $\Sigma_2$. Moreover, suppose that both $\Sigma_1-\pi_1(L_1)$ and $\Sigma_2-\pi_2(L_2)$ are disjoint unions of disks. Then there exists disks $D_1$ and $D_2$ in $\Sigma_1$ and $\Sigma_2$ meeting the projections $\pi(L_1)$ and $\pi(L_2)$ in a single arc such that $L_1\#L_2$ has an alternating projection to $\Sigma_1\#\Sigma_2$ whose complementary regions are a disjoint union of disks where the connected sum of $\Sigma_1$ and $\Sigma_2$ is taken along disks $D_1$ and $D_2$. Hence $g_{\alt}(L_1\#L_2)\leq g_{\alt}(L_1)+g_{\alt}(L_2)$, and therefore the alternating genus of a link is an alternating distance.
\end{proof}

\subsection{Warping span}

Shimizu \cite{Shimizu:WarpingKnot, Shimizu:WarpingLink} defines the warping degree of a knot or link diagram and uses warping degree to define the warping polynomial of a knot diagram \cite{Shimizu:WarpingPolynomial}. She defines the {\em span} of a knot $K$, denoted $\operatorname{spn}(K)$, to be the minimum span of the warping polynomial for any diagram of $K$.
We define a related invariant, called the warping span of $K$, that is essentially a renormalization of the span of the warping polynomial. 

Let $D$ be a knot diagram with $c>0$ crossings, and again let $\Gamma$ be the $4$-valent graph obtained from $D$ by forgetting the ``over-under" information at each crossing. An {\em edge} of $D$ is just an edge of $\Gamma$. Choose an orientation of $D$, and label the edges of $D$ by $e_1,e_2, \dots, e_{2c}$ where edge $e_1$ is chosen arbitrarily and edge $e_{i+1}$ follows edge $e_i$ with respect to the orientation of $D$. Assign a weight $d_i$ to each edge $e_i$ as follows. Set $d_1=0$, and set $d_{i+1}=d_i \pm 1$ according to the conventions of Figure \ref{figure:weights}.

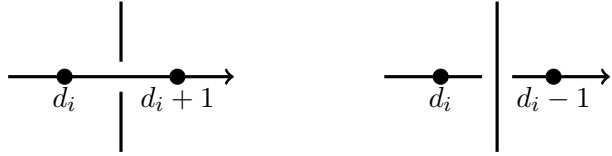
\begin{figure}[h]
$$\begin{tikzpicture}
\draw[very thick,->] (0,1) -- (3,1);
\draw[very thick] (1.5,2) -- (1.5,1.2);
\draw[very thick] (1.5,0.8) -- (1.5,0);
\draw[fill] (.75,1) circle (.1cm);
\draw[fill] (2.25,1) circle (.1cm);
\draw (.75,1) node[below]{$d_i$};
\draw (2.25,1) node[below]{$d_{i}+1$};

\begin{scope}[xshift = 5cm]
	\draw[very thick] (0,1) -- (1.3,1);
	\draw[very thick,->] (1.7,1) -- (3,1);
	\draw[very thick] (1.5,2) -- (1.5,0);
	\draw[fill] (.75,1) circle (.1cm);
	\draw[fill] (2.25,1) circle (.1cm);
	\draw (.75,1) node[below]{$d_i$};
	\draw (2.25,1) node[below]{$d_{i}-1$};
\end{scope}

\end{tikzpicture}$$
\caption{Weights on either side of a crossing.}
\label{figure:weights}
\end{figure}

Define the warping span of $D$ by $\warp(D)=\frac{1}{2}\max\{d_i-d_j-1~|~1\leq i,j \leq 2c\}$. The warping span of $D$ does not depend on the choice of orientation or the choice of initial edge $e_1$. In fact, any choice of weight for $d_1$ does not change $\warp(D)$. If $D$ does not have any crossings, i.e. $D$ is the crossingless diagram of the unknot, then define $\warp(D)=0$. The {\em warping span} of a knot $K$, denoted $\warp(K)$, is defined to be the minimum of $\warp(D)$ taken over all diagrams $D$ of $K$. For any nontrivial knot, the warping span and the span of $K$ are related via
$$\warp(K) = \frac{1}{2}\left(\operatorname{spn}(K)-1\right),$$
and for the unknot $U$, we have $\warp(U)=0$ while $\operatorname{spn}(U)=0$. An example of a diagram decorated with its weights is given in Figure \ref{figure:warpexample}.

\begin{figure}[h]
$$\begin{tikzpicture}[scale=.3]
\begin{scope}[thick, rounded corners = 2mm]
	\draw [->, ] (-3.5,.5) -- (-2.5,1.5);
	\draw (5.25, 1.25) -- (6.75,2.75);
	\draw (15.25,3.25) --(16,4) -- (21, -1) -- (16,-6) -- (0,-6) -- (-5,-1)--(0,4) -- (2,4) -- (6,0) -- (8,0)--(8.75, .75);
	\draw (1.25, 1.25) -- (2.75, 2.75);
	\draw (9.25, 1.25) -- (10.75, 2.75);
	\draw (3.25, 3.25) -- (4,4) -- (6,4) -- (10,0) -- (12,0) -- (12.75,.75);
	\draw (7.25,3.25) -- (8,4) -- (10,4) -- (14,0)-- (15,-1) -- (14,-2) -- (0,-2) -- (-1,-1) -- (0,0) -- (.75,.75);
	\draw (11.25, 3.25) -- (12,4) -- (14,4) -- (16,2) -- (19,-1) -- (16,-4) -- (0,-4)-- (-3,-1) -- (0,2) -- (2,0) -- (4,0) -- (4.75,.75);;
	\draw (13.25, 1.25) -- (14.75, 2.75);
	
	\draw (1,4) node[above]{$0$};
	\draw (5,4) node[above]{$0$};
	\draw (9,4) node[above]{$0$};
	\draw (13,4) node[above]{$0$};
	
	\draw (1.4,2.5) node{$1$};
	\draw (5.4,2.5) node{$1$};
	\draw (9.4,2.5) node{$1$};
	\draw (13.4,2.5) node{$1$};
	
	\draw (3.3, 1.7) node{$1$};
	\draw (7.3, 1.7) node{$1$};
	\draw (11.3, 1.7) node{$1$};
	\draw (15.3, 1.7) node{$1$};
	
	\draw (3,0) node[below]{$2$};
	\draw (7,0) node[below]{$2$};
	\draw (11,0) node[below]{$2$};
	\draw (7,-2) node[below]{$2$};

\end{scope}
\end{tikzpicture}$$
\caption{The $(3,4)$-torus knot together with its edge weights.}
\label{figure:warpexample}
\end{figure}
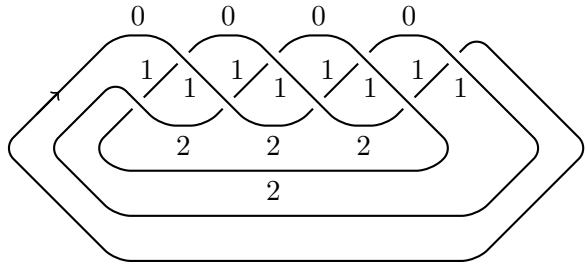

Warping span can be extended to apply to links. Suppose that $D$ is a diagram of a link with $\ell$ components $C_1,\dots, C_\ell$ such that each component has at least one crossing. Again let $\Gamma$ be the graph obtained from $D$ by forgetting the ``over-under" information at each crossing. Arbitrarily choose an orientation of $D$ and  one edge to label $e_1^k$ for each component $C_k$. Label the remaining edges so that $e_{i+1}^k$ follows $e_i^k$ with respect to the orientation of the $k$\textsuperscript{th} component $C_k$ of $D$. Suppose that component $C_k$ has $c_k$ edges for each $k$ with $1\leq k \leq \ell$. Assign the weight $d_1^k=0$ to each edge $e_1^k$, and set $d_{i+1}^k=d_i^k\pm 1$ according to the conventions of Figure \ref{figure:weights}. For each component $C_k$, define $w_k= \frac{1}{2}\max\{d_i^k-d_j^k-1~|~1\leq i,j \leq c_k\}$. Set $\warp(D)=\max\{w_k~|~1\leq k \leq \ell\}$. If $D$ is the standard diagram of an $\ell$-component unlink, then define $\warp(D)=0$. For any link diagram $D$, let $D\sqcup U$ be the disjoint union of $D$ and the standard diagram of the unknot, and define $\warp(D\cup U)=\warp(D)$. The warping span of the link $L$, denoted $\warp(L)$, is the minimum of $\warp(D)$ where $D$ is any diagram of $L$.

\begin{proposition}
The warping span of a link is an alternating distance.
\end{proposition}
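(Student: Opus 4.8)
The plan is to verify the three defining properties of an alternating distance directly from the combinatorics of the edge weights. For a component $C_k$ of a diagram $D$, write $R_k=\max_i d_i^k-\min_j d_j^k$ for the range of its weight function, so that $w_k=\frac{1}{2}(R_k-1)$ and $\warp(D)=\max_k w_k$. The observation I would use throughout is that, as one travels along $C_k$, the weight changes by exactly $\pm 1$ at each crossing; thus the weights are consecutive integers and $R_k$ measures how far the over/under pattern drifts from strict alternation.

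For property (1), any component that carries at least one crossing undergoes at least one weight change, so $R_k\geq 1$ and $w_k\geq 0$, while the crossingless cases are set to $0$ by definition; hence $\warp(D)=\max_k w_k\geq 0$ for every diagram, and minimizing over diagrams gives $\warp(L)\geq 0$. For property (2), I would show $\warp(D)=0$ if and only if $D$ is alternating. If $D$ is alternating, then along each component the crossings alternate, so the weight changes alternate $+1,-1,+1,-1,\dots$, the weights take only two consecutive values, $R_k=1$, and $\warp(D)=0$. Conversely, $\warp(D)=0$ forces $R_k\leq 1$, hence $R_k=1$ by property (1); the weights then take exactly two consecutive values, which is possible only if the $\pm 1$ changes alternate, i.e. the diagram is alternating. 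Therefore a non-split link satisfies $\warp(L)=0$ exactly when it admits an alternating diagram, i.e. exactly when it is alternating.

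The main work is property (3). Given non-split links $L_1,L_2$ and a connected sum formed along components $c_1\subset L_1$ and $c_2\subset L_2$, I would start from diagrams $D_1,D_2$ with $\warp(D_i)=\warp(L_i)$ and build a diagram $D_1\# D_2$ of the prescribed connected sum by splicing. Since the splice point can be slid along a component by an isotopy, I am free to cut $c_1$ and $c_2$ at edges where their weight functions attain their minima. Because the two connecting arcs carry no crossings, the weight function of the resulting merged component is the concatenation of the weight profiles of $c_1$ and $c_2$, each read upward from the common junction value; its range is then $\max(R_{c_1},R_{c_2})$ rather than the sum. Every other component of $D_1\# D_2$ is identical to a component of $D_1$ or $D_2$ and retains its old weight function. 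Consequently $\warp(D_1\# D_2)=\max(\warp(D_1),\warp(D_2))\leq \warp(L_1)+\warp(L_2)$, and minimizing yields $\warp(L_1\# L_2)\leq \warp(L_1)+\warp(L_2)$.

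The step I expect to require the most care is the range computation for the merged component, because, unlike for a knot, the weight function of a single component of a link need not return to its starting value: its net change over one component can be nonzero when that component crosses others, so the profile is genuinely a walk rather than a closed loop. The device that keeps the bound clean is exactly the choice of splice edges at minimal weight, which (by a cycle-lemma style argument, starting each profile just after its minimum) lets both profiles be read from a shared nonnegative baseline so the larger range dominates. Verifying that this splice realizes the prescribed topological connected sum, and that no unchanged component can contribute more than $\warp(L_1)+\warp(L_2)$, is the crux of the argument.
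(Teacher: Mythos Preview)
Your approach is essentially the paper's: the same weight-range argument for properties (1) and (2), and for (3) the paper simply cites Shimizu's inequality $\warp(K_1\#K_2)\leq\max\{\warp(K_1),\warp(K_2)\}$ for knots and asserts without further detail that her argument carries over to links. You have reconstructed that argument (splice at minimum-weight edges so the two profiles share a baseline) and, unlike the paper, explicitly flagged the one nontrivial point---that a link component's weight profile need not close up---so your proposal is at least as detailed as the published proof.
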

\begin{proof}
Let $D$ be a diagram of $L$. If $D$ does not contain any crossings, then $\warp(D)=0$. If $D$ contains at least one crossing, then its edges contain at least two distinct weights, and hence $\warp(D)\geq 0$. Moreover, if $\warp(D)=0$, then every component of $D$ with a crossing contains exactly two distinct weights, which can happen if and only if $D$ is an alternating diagram. Hence $\warp(L)\geq 0$ and $\warp(L)=0$ if and only if $L$ is alternating. Shimizu \cite{Shimizu:WarpingPolynomial} proves that $\warp(K_1\#K_2)\leq \max\{\warp(K_1),\warp(K_2)\}$ for knots $K_1$ and $K_2$. Her argument also applies to the warping span of links, and so 
$$\warp(L_1\#L_2)\leq \max\{\warp(L_1),\warp(L_2)\}\leq \warp(L_1)+\warp(L_2)$$
for any connected sum $L_1\#L_2$ of links $L_1$ and $L_2$. Hence the warping span of a link is an alternating distance.
\end{proof}

Shimizu proves that changing a crossing in a knot diagram $D$ can alter $\warp(D)$ by at most one. The proof when $D$ is instead a link diagram is identical. Consequently,
\begin{equation}
\label{ineq:warpdalt}
\warp(L)\leq \dalt(L)
\end{equation}
for any link $L$.

\section{Obstructions and Lower Bounds}
\label{section:bounds}

The invariants under consideration are defined as a minimum over all diagrams or as a minimum over all projections to some surface. Typically, invariants of this form are difficult to compute, and so it will be useful to have obstructions and computable lower bounds for as many of the invariants as possible. The first obstruction comes from the hyperbolic geometry of the link complement. Menasco \cite{Menasco:AlternatingHyperbolic} proves that a prime, non-split alternating link is either a torus link or a hyperbolic link. Adams et al.\@ \cite{Adams:Almost} and Adams \cite{Adams:ToroidallyAlternating} extend this result to almost-alternating and toroidally alternating knots.
\begin{proposition}[Adams et al.\@]
\label{prop:hyperbolic}
 Let $K$ be a prime knot. If $\dalt(K)=1$ or $g_{\alt}(K)=1$, then $K$ is either a torus knot or a hyperbolic knot.
\end{proposition}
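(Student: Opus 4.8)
The plan is to reduce the statement to the two extensions of Menasco's theorem already obtained by Adams and his coauthors, and to recall the geometric mechanism behind them. First I would unwind the hypotheses in terms of the diagrammatic notions introduced above. By definition, a knot with $\dalt(K)=1$ is \emph{almost alternating} in the sense of \cite{Adams:Almost}: it admits a diagram that becomes alternating after a single crossing change, yet is not itself alternating. Likewise, $g_{\alt}(K)=1$ says precisely that $K$ admits an alternating projection to a Heegaard torus whose complementary regions are disks but admits no such projection to a sphere; that is, $K$ is \emph{toroidally alternating} in the sense of \cite{Adams:ToroidallyAlternating}. So the two hypotheses are exactly the two families whose geometry Adams et al.\ analyzed, and in each case $K$ is nontrivial since it is non-alternating.

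Next I would invoke the Thurston trichotomy: a nontrivial knot in $S^3$ is a torus knot, a satellite knot, or hyperbolic, and these possibilities are mutually exclusive. Since torus knots are explicitly permitted in the conclusion, it suffices to rule out that $K$ is a non-torus satellite, i.e.\ to show that $S^3-K$ contains no essential torus unless $K$ is itself a torus knot. This is exactly what the cited theorems accomplish: \cite{Adams:Almost} proves that a prime almost-alternating knot is a torus knot or hyperbolic, and \cite{Adams:ToroidallyAlternating} proves the same for prime toroidally-alternating knots. Splitting into the two cases according to which hypothesis holds and applying the appropriate theorem yields the proposition.

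The heart of the matter, and the step I expect to be hardest, is the incompressible-surface analysis underlying those two theorems, which generalizes Menasco's \cite{Menasco:AlternatingHyperbolic} treatment of the alternating case. There one places a putative essential torus into standard position with respect to the projection surface and the crossing bubbles, so that it meets each bubble in saddles and meets the projection surface in simple closed curves, and then derives a contradiction from the combinatorics of an alternating diagram. The obstacle in the present setting is that the diagram is no longer globally alternating: in the almost-alternating case one must control the single non-alternating crossing, and in the toroidally-alternating case the projection surface has genus one rather than zero. Managing these local or global defects in the standard-position argument, without which an essential torus cannot be excluded, is the technical crux, and is precisely the content supplied by \cite{Adams:Almost} and \cite{Adams:ToroidallyAlternating}. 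Primeness of $K$ is essential here, as it rules out the composite knots, whose swallow-follow tori would otherwise produce satellite structure.
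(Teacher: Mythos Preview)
Your reduction is correct and matches the paper exactly: the paper does not give a proof of this proposition at all, but simply records it as a result of Adams et al., citing \cite{Adams:Almost} for the almost-alternating case and \cite{Adams:ToroidallyAlternating} for the toroidally-alternating case, just as you do. Your additional discussion of the Thurston trichotomy and of the Menasco-style incompressible-surface analysis underlying those cited results is accurate and useful context, but it goes beyond what the paper itself supplies.
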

Since a knot has alternating genus and Turaev genus zero if and only if it is alternating, Inequality \ref{Inequality:TuraevAltGenus} implies the following corollary.
\begin{corollary}
Let $K$ be a prime knot. If $g_T(K)=1$, then $K$ is either a torus knot or a hyperbolic knot.
\end{corollary}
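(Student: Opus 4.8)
The plan is to derive the corollary directly from Proposition~\ref{prop:hyperbolic} together with Inequality~\ref{Inequality:TuraevAltGenus}. The strategy is to convert the hypothesis $g_T(K)=1$ into the hypothesis $g_{\alt}(K)=1$ demanded by the proposition, after which the desired conclusion is immediate. The only real work is to pin down the exact value of $g_{\alt}(K)$, rather than merely bounding it from above.

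First I would invoke Inequality~\ref{Inequality:TuraevAltGenus}, which gives $g_{\alt}(K)\le g_T(K)=1$. Since the alternating genus is by definition the minimum genus of a Heegaard surface, it is a non-negative integer, so $g_{\alt}(K)\in\{0,1\}$. Next I would rule out the value $0$: a knot has alternating genus zero if and only if it is alternating, and likewise a knot has Turaev genus zero if and only if it is alternating. Hence if $g_{\alt}(K)=0$, then $K$ is alternating, which would force $g_T(K)=0$, contradicting the hypothesis $g_T(K)=1$. Therefore $g_{\alt}(K)=1$. Applying Proposition~\ref{prop:hyperbolic} to the prime knot $K$ then yields that $K$ is either a torus knot or a hyperbolic knot.

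I expect the main (and essentially the only) obstacle to be the step that excludes $g_{\alt}(K)=0$, since everything else is a one-line inequality. This step is where the two genus-zero characterizations are used in tandem: the inequality $g_{\alt}\le g_T$ alone only gives $g_{\alt}(K)\le 1$, and without the observation that Turaev genus $1$ precludes $K$ from being alternating one cannot upgrade this bound to the exact value $g_{\alt}(K)=1$ needed to feed into the proposition. Once that equality is secured, no further case analysis or geometric input is required.
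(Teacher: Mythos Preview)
Your argument is correct and is essentially the same as the paper's: use Inequality~\ref{Inequality:TuraevAltGenus} to get $g_{\alt}(K)\le 1$, rule out $g_{\alt}(K)=0$ via the fact that both alternating genus and Turaev genus vanish exactly on alternating knots, and then apply Proposition~\ref{prop:hyperbolic}.
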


Our computable lower bounds arise from either Khovanov  homology \cite{Khovanov:homology} or knot Floer homology \cite{OzsvathSzabo:HolomorphicDisks, Rasmussen:HFK}. The Khovanov homology of a link $L$, denoted $Kh(L)$, is a bigraded $\mathbb{Z}$-module with homological grading $i$ and polynomial (or quantum) grading $j$. The diagonal grading $\delta$ is defined by $\delta=j-2i$, and when $Kh(L)$ is decomposed over summands with respect to the $\delta$-grading, we write $Kh(L) = \bigoplus_{\delta} Kh^\delta(L).$ The {\em width of $Kh(L)$}, denoted $w(Kh(L))$, is defined as
$$w(Kh(L)) =\frac{1}{2} \left(\max\{\delta|Kh^\delta(L)\neq 0\} - \min\{\delta|Kh^\delta(L)\neq 0\}\right) +1.$$
The factor of $1/2$ is included in the definition of width since if $L$ has an odd number of components, all $\delta$-gradings where $Kh^\delta(L)\neq 0$ are even, and if $L$ has an even number of components, all $\delta$-gradings where $Kh^\delta(L)\neq 0$ are odd. Champanerkar and Kofman \cite{ChampanerkarKofman:SpanningTrees} and independently Wehrli \cite{Wehrli:SpanningTrees} show that there is a complex whose generators correspond to spanning trees of the checkerboard graph of a diagram $D$ of $L$ and whose homology is the Khovanov homology $Kh(L)$ of $L$. Champanerkar, Kofman, and Stotlzfus \cite{CKS:DessinsKhovanov} use a relationship between spanning trees of the checkerboard graph of $D$ and certain graphs embedded in the Turaev surface $F(D)$ to show
\begin{equation}
\label{ineq:TuraevKhov}
w(Kh(L)) - 2 \leq g_T(L).
\end{equation}
The relationship between the Turaev surface and Khovanov homology is further explained by Dasbach and the author \cite{DL:Ribbon}.

Let $\mathbb{F}$ denote the vector space with two elements. We consider the ``hat version" $\widehat{HFK}(K)$ of the knot Floer homology of a knot $K$ with coefficients in $\mathbb{F}$. The invariant $\widehat{HFK}(K)$ is a bigraded $\mathbb{F}$-vector space with Maslov (or homological) grading $m$ and Alexander (or polynomial) grading $s$. The diagonal grading $\delta$ is defined as $\delta=s-m$, and when $\widehat{HFK}(K)$ is decomposed over summands with respect to the $\delta$-grading, we write $\widehat{HFK}(K) = \bigoplus_{\delta}\widehat{HFK}_\delta(K).$ The {\em width of $\widehat{HFK}(K)$}, denoted $w(\widehat{HFK}(K))$, is defined as
$$w(\widehat{HFK}(K))= \max\{\delta|\widehat{HFK}_\delta(K)\neq 0\} - \min\{\delta | \widehat{HFK}_\delta(K)\neq 0\} + 1.$$
Ozsv\'ath and Szab\'o \cite{OzsvathSzabo:Alternating} show that there is a complex whose generators correspond to spanning trees of the checkerboard graph of a diagram $D$ of $K$ and whose homology is $\widehat{HFK}(K)$. The author \cite{Lowrance:WidthTuraevGenus} uses the \OS~spanning tree complex to prove that 
\begin{equation}
\label{inequality:HFKTuraev}
w(\widehat{HFK}(K)) - 1\leq g_T(K).
\end{equation}

Rasmussen \cite{Rasmussen:KhovanovSlice} uses Lee's spectral sequence \cite{Lee:KhovanovAlternating} to show that the Khovanov complex of a knot $K$ gives rise to a concordance invariant $s(K)$. Suppose that $K_+$ and $K_-$ are two knots such that $K_+$ can be transformed into $K_-$ by changing a single positive crossing to a negative crossing in some diagram (see Figure \ref{figure:sign}). 
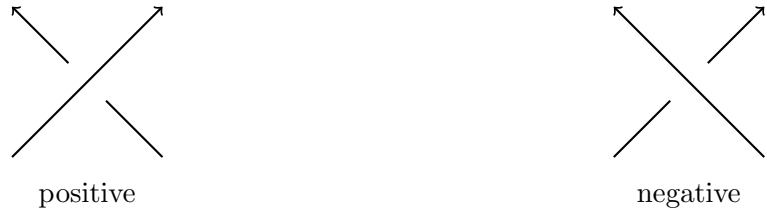
\begin{figure}[h]
$$\begin{tikzpicture}

\draw [thick, ->] (0,0) -- (2,2);
\draw [thick, ->] (.75,1.25) -- (0,2);
\draw [thick] (1.25,.75) -- (2,0);
\draw (1,-.5) node{positive};

\begin{scope}[xshift = 8cm]
\draw [thick, ->] (2,0) -- (0,2);
\draw [thick, ->] (1.25,1.25) -- (2,2);
\draw [thick] (0,0) -- (.75, .75);
\draw (1,-.5) node{negative};
\end{scope}

\end{tikzpicture}$$
\caption{{\bf Left:} a positive crossing. {\bf Right:} a negative crossing.}
\label{figure:sign}
\end{figure}

Then
\begin{equation}
\label{inequality:Rasmussen}
s(K_-)\leq s(K_+) \leq s(K_-)+2.
\end{equation}
Let $\sigma(K)$ denote the signature of a knot with sign convention chosen so that the signature of the positive trefoil is $-2$. Cochran and Lickorish \cite{CochranLickorish:Unknotting} show that
\begin{equation}
\label{inequality:sigcross}
\sigma(K_-)-2\leq \sigma(K_+)\leq \sigma(K_-).
\end{equation}
Abe \cite{Abe:AlternationNumber} uses the behavior of the $s$-invariant and signature under crossing changes to show
\begin{equation}
\label{inequality:abealt}
|s(K) + \sigma(K)| \leq 2\alt(K).
\end{equation}

Using work on knot signature of Murasugi \cite{Murasugi:InvariantsOfGraphs} and Thistlethwaite \cite{Thistlethwaite:AdequateKauffman} and the spanning tree complexes of Champankerkar-Kofman and Wehrli, Dasbach and the author \cite{DL:Concordance} show that
\begin{equation}
\label{inequality:DLRasSig}
|s(K) + \sigma(K)| \leq 2g_T(L).
\end{equation}

\section{Some families of links and their alternating distances}
\label{section:examples}

In this section we examine the three families of links: iterated Whitehead doubles of the figure-eight knot, modified torus links, and the $(3,q)$-torus links. We use these three families to prove Theorems \ref{theorem:main1} and \ref{theorem:main2}.

\subsection{Iterated Whitehead doubles of the figure-eight knot}

Let $P$ be a knot embedded in a genus one handlebody $Y$. For any knot $K$, identify a regular neighborhood of $K$ with $Y$ such that the generator of $H_1(Y,\mathbb{Z})$ is identified with a longitude of $K$ coming from a Seifert surface. The image of $P$ is a knot $S$, called a satellite of $K$. The knot $P$ is the {\em pattern} for $S$, and $K$ is the {\em companion}. Define the {\em positive $t$-twisted Whitehead double} of a knot $K$, denoted $D_+(K,t)$, to be the satellite of $K$ where the pattern knot $P$ is the $t$-twisted positive clasp knot given in Figure \ref{figure:Whitehead}. We use certain iterated Whitehead doubles of the figure-eight knot to show that the Turaev genus and dealternating number dominate the alternation number of a link.
\begin{figure}[h]
$$\begin{tikzpicture}[scale=.7]
\begin{scope}[yshift=8cm, xshift=.8cm]

\draw [very thin] (0,0) ellipse (3cm and 2cm);
\begin{scope}[yshift=.3cm]
	\draw [very thin] (0:1cm) arc (0:-180: 1cm and .5cm);
\end{scope}
\draw [very thin] (0:.8cm) arc (0:180 :.8cm and .4cm);

\begin{pgfonlayer}{background2}
\draw [very thick] (2.5,0) arc (0:-270: 2.5cm and 1.725cm);
\draw [very thick] (2.5,0) arc (0: 90: 2.5cm and 1.5cm);
\draw [very thick] (0,1.375) arc (90:180: 2cm and 1.375cm);
\draw [very thick] (-2,0) arc (-180:0:2cm and 1.375cm);
\draw [very thick] (2,0) arc (0:90:2cm and 1.25cm);
\draw [very thick] (0,1.725) arc (90:10:.2cm);
\draw [very thick] (0,1.375) arc (-90: -45:.2cm);
\draw [very thick] (0,1.5) arc (90:140:.2cm);
\draw [very thick] (0,1.25) arc (-90:-140:.2cm);
\end{pgfonlayer}

\begin{pgfonlayer}{background}
\fill[white] (-.5,-1.8) rectangle (.5, -1.2);
\draw (-.5,-1.8) rectangle (.5, -1.2);
\end{pgfonlayer}

\draw (0,-1.5) node{$t$};

\end{scope}

\begin{scope}[very thick, scale=.4]
	\draw [rounded corners = 3mm] (1.5,1.5) -- (0,0) -- (-2,0) -- (-4,4.5) -- (-2,9) -- (3.5,9);
	\draw [rounded corners = 3mm] (1.5,5.5) -- (0,4) -- (4,0) -- (6,0) -- (8,4.5) -- (6,9) -- (4.5,9);
	\draw [rounded corners = 2mm] (2.5,2.5) -- (4,4) -- (0,8) --(0,8.5);
	\draw [rounded corners = 2mm] (2.5,6.5) -- (4,8) -- (4,10) -- (2,12) -- (0,10) -- (0,9.5);
\end{scope}

\begin{scope}[scale = .6, xshift= 14cm, yshift=4cm, rounded corners = 3mm]
\begin{pgfonlayer}{background2}
	\draw [very thick]  (1.6,1.4) -- (0,-.2) -- (-2,-.2) -- (-4,4.5) -- (-2,9.2) -- (2,9.2);
	\draw [very thick] (1.4, 8.8) -- (-1.8,8.8) -- (-3.6,4.5) -- (-1.8,.2) -- (0,.2) -- (1.5, 1.7);
	\draw [very thick] (1.8, 8.8) -- (2,8.8);
	\draw [very thick] (3.5, 9) -- (2.4,9);
	\draw [very thick] (2,9) -- (1.8,9);
	\draw [very thick] (1.8,9) arc (90:270:.2cm);
	\draw [very thick] (3.5, 8.6) -- (1.8, 8.6);
	\draw [very thick] (1.4,5.6) -- (-.2,4) -- (3.8,-.2) -- (6,-.2) -- (8,4.5) -- (6,9) -- (4.5,9);
	\draw [very thick](4.5, 8.6) -- (5.6,8.6) -- (7.6,4.5) -- (5.6,.2) -- (4,.2) -- (.2,4) -- (1.7,5.5);
	\draw  [very thick](2.6,2.4) -- (4.2,4) -- (0.2,8) --(0.2,8.5);
	\draw [very thick] (2.3,2.5) -- (3.8,4) -- (-.2,8) -- (-.2,8.5);
	\draw  [very thick](2.5,6.3) -- (4.2,8) -- (4.2,10) -- (2,12.2) -- (-.2,10) -- (-.2,9.5);
	\draw [very thick](2.3, 6.5) -- (3.8, 8) -- (3.8,10) -- (2,11.8) -- (.2,10) -- (.2,9.5);
	\draw [very thick](2,9.2) arc (90:-90:.2cm);
\end{pgfonlayer}
\begin{pgfonlayer}{background}
\begin{scope}[rounded corners = 0mm]
	\fill[white] (-1.5,.5) rectangle (0,-.4);
	\draw (-1.5,.5) rectangle (0,-.4);
\end{scope}
\end{pgfonlayer}

\draw (-.75,.05) node{$t$};

\end{scope}

\draw (-.7,8) node{$P$};
\draw (4.5, 8) node{$Y$};
\draw (4,2) node{$K$};
\draw (9.5, 1.5) node{$D_+(K,t)$};

\end{tikzpicture}$$
\caption{The satellite of the figure-eight $K$ with pattern $P$ and companion $K$ is the positive $t$-twisted Whitehead double $D_+(K,t)$ of $K$. Each box labeled $t$ indicates $t$ positive full twists.}
\label{figure:Whitehead}
\end{figure}
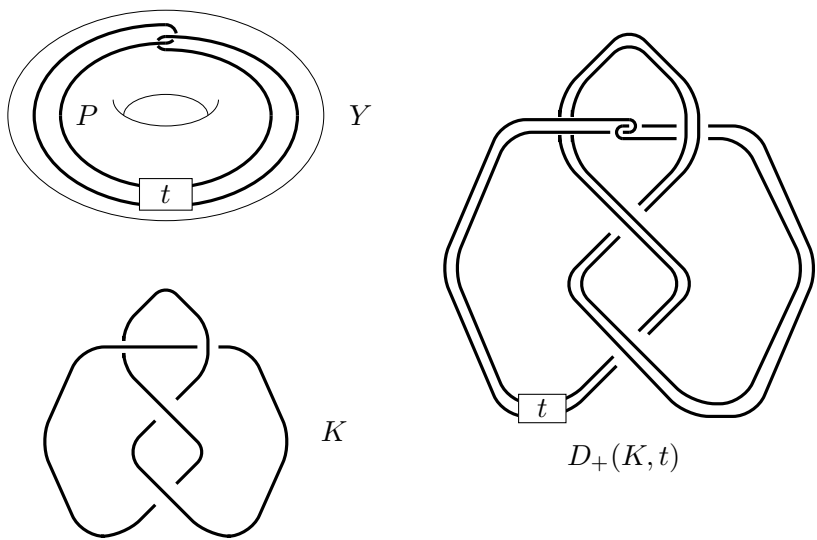

Let $W_0$ be the figure-eight knot $4_1$. For each positive integer $n$, define $W_n=D_+(W_{n-1},0)$, that is $W_n$ is the positive $n$-th iterated untwisted Whitehead double of the figure-eight knot. Define the family $\mathcal{F}(W_n)$ by $\mathcal{F}(W_n) = \{W_n~|~n\geq 0, n\in\mathbb{Z}\}$. Hedden \cite{Hedden:Whitehead} computes the knot Floer homology of $W_n$.

\begin{proposition}[Hedden]
\label{prop:Hedden}
Let $\mathbb{F}$ denote the field with two elements, and let $\mathbb{F}^k_{(m)}$ denote the vector space $\mathbb{F}^k$ in homological grading $m$. Then
$$\widehat{HFK}_*(W_n,s)\cong
\begin{cases}
\bigoplus_{m=0}^n\mathbb{F}^{2^{n}\binom{n}{m}}_{(1-m)} & s=1,\\
\mathbb{F}_{(0)}\bigoplus_{m=0}^n \mathbb{F}^{2^{n+1}\binom{n}{m}}_{(-m)} & s=0,\\
\bigoplus_{m=0}^n\mathbb{F}^{2^{n}\binom{n}{m}}_{(-1-m)} & s=-1,\\
0 & \text{otherwise,}
\end{cases}$$
and $w(\widehat{HFK}(W_n))=n+1$.
\end{proposition}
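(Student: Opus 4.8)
The plan is to read the width off directly from Hedden's explicit computation, since the substantive content of the proposition---the isomorphism type of $\widehat{HFK}(W_n)$ in each Alexander grading---is precisely Hedden's theorem, which I would cite. All that remains is to locate the extreme $\delta$-gradings, where $\delta = s - M$ with $s$ the Alexander grading and $M$ the Maslov grading. The one point to keep straight is that the summation index appearing in Hedden's formula is \emph{not} the Maslov grading itself; the Maslov grading of each summand is the integer recorded in the parenthetical subscript.

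I would then carry this out grading by grading. In Alexander grading $s = 1$ the summand $\mathbb{F}^{2^n\binom{n}{m}}$ sits in Maslov grading $M = 1-m$, so its diagonal grading is $\delta = s - M = 1 - (1-m) = m$; as $m$ ranges over $0, \dots, n$ we obtain $\delta \in \{0,1,\dots,n\}$. In Alexander grading $s = 0$, the summand $\mathbb{F}^{2^{n+1}\binom{n}{m}}$ sits in Maslov grading $-m$, giving $\delta = 0 - (-m) = m$, while the extra generator $\mathbb{F}_{(0)}$ has $\delta = 0$; again $\delta \in \{0,1,\dots,n\}$. Finally, in Alexander grading $s = -1$ the summand $\mathbb{F}^{2^n\binom{n}{m}}$ sits in Maslov grading $-1-m$, so $\delta = -1 - (-1-m) = m$, once more sweeping out $\{0,1,\dots,n\}$. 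Since $\binom{n}{m} \neq 0$ for $0 \le m \le n$, every one of these $\delta$-values is genuinely realized by a nontrivial summand.

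Putting the three cases together, the set of $\delta$ for which $\widehat{HFK}_\delta(W_n) \neq 0$ is exactly $\{0, 1, \dots, n\}$. Hence $\max\{\delta \mid \widehat{HFK}_\delta(W_n) \neq 0\} = n$ and $\min\{\delta \mid \widehat{HFK}_\delta(W_n) \neq 0\} = 0$, and the defining formula for width yields $w(\widehat{HFK}(W_n)) = n - 0 + 1 = n+1$.

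There is essentially no obstacle here beyond careful bookkeeping: the genuine difficulty is concentrated in Hedden's computation of the groups themselves, which I take as given. The only delicate point is the notational clash between the summation index $m$ and the Maslov grading (also conventionally denoted $m$); conflating the two would corrupt the $\delta$-range. Once the Maslov gradings are read correctly from the parenthetical subscripts, the width computation is immediate, with the extremal gradings $\delta = 0$ (from the bottommost summands, $m=0$) and $\delta = n$ (from the topmost summands, $m=n$) both attained in every Alexander grading.
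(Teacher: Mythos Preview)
Your proposal is correct. The paper itself gives no proof of this proposition: it is stated as Hedden's result, with the computation of the groups and the width both attributed to the citation. Your verification that the $\delta$-gradings realized are exactly $\{0,1,\dots,n\}$ is accurate and is more explicit than anything the paper provides, so there is nothing to compare against beyond noting that your bookkeeping is sound.
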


The alternation number, Turaev genus, and alternating genus of $W_n$ behave according to the following proposition.
\begin{proposition} 
\label{prop:Whitehead}
For each positive integer $n$,
\begin{align*}
\operatorname{alt}(W_n) & = 1,\\
g_T(W_n) & \geq n,~\text{and} \\
 g_{\operatorname{alt}}(W_n) & > 1.
\end{align*}
\end{proposition}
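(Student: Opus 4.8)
The plan is to establish each of the three claims in Proposition \ref{prop:Whitehead} using the lower bounds assembled in Section \ref{section:bounds} together with Hedden's knot Floer computation in Proposition \ref{prop:Hedden}. The Turaev genus bound is the most direct: since $w(\widehat{HFK}(W_n)) = n+1$, Inequality \ref{inequality:HFKTuraev}, namely $w(\widehat{HFK}(K)) - 1 \leq g_T(K)$, immediately yields $g_T(W_n) \geq (n+1) - 1 = n$. This requires no further work beyond citing the computation.

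For the upper bound $\operatorname{alt}(W_n) = 1$, I would proceed in two directions. First, to see $\operatorname{alt}(W_n) \leq 1$, I would exhibit a diagram of $W_n$ in which a single crossing change produces a diagram of an alternating (indeed likely unknotted or simple alternating) link; the natural candidate is to change the clasp crossing in the outermost Whitehead pattern, which should untwist the doubling and reduce the knot to something alternating. One must check carefully that changing exactly one crossing in the standard iterated-double diagram lands on the set of alternating links. Second, to see $\operatorname{alt}(W_n) \geq 1$, it suffices to show $W_n$ is not itself alternating; this follows since $W_n$ is non-alternating whenever $g_T(W_n) \geq n \geq 1$, because alternating links have Turaev genus zero. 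Thus $\operatorname{alt}(W_n) = 1$ for every positive $n$.

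The claim $g_{\operatorname{alt}}(W_n) > 1$ is where I expect the main obstacle, since there is no width-type lower bound for the alternating genus analogous to Inequality \ref{inequality:HFKTuraev}. Instead I would invoke Proposition \ref{prop:hyperbolic}: if $g_{\operatorname{alt}}(W_n) = 1$, then (assuming $W_n$ is prime, which I would verify, as Whitehead doubles of nontrivial knots are prime) $W_n$ must be either a torus knot or a hyperbolic knot. The strategy is then to rule both of these out. Whitehead doubles have trivial Alexander polynomial and are never nontrivial torus knots, which eliminates the torus case. The harder geometric input is that an iterated Whitehead double $W_n$ is a satellite knot with a nontrivial companion (the figure-eight, or the previous double), hence it contains an essential torus in its complement and is therefore \emph{not} hyperbolic. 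Since $W_n$ is also not alternating (so $g_{\operatorname{alt}}(W_n) \neq 0$) and cannot be a torus or hyperbolic knot, Proposition \ref{prop:hyperbolic} forces $g_{\operatorname{alt}}(W_n) \neq 1$, and combined with non-alternateness we conclude $g_{\operatorname{alt}}(W_n) > 1$.

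The step I anticipate being most delicate is the satellite/incompressible-torus argument ruling out hyperbolicity: one must confirm that each $W_n$ genuinely has a nontrivial companion and that the companion torus is essential (incompressible and non-boundary-parallel) in the complement, so that $W_n$ is a genuine satellite and hence not hyperbolic. For $n \geq 1$ this should hold because $W_n$ is the untwisted double of the nontrivial knot $W_{n-1}$, but the base case and the primality verification deserve care.
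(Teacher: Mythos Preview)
Your proposal is correct and follows essentially the same route as the paper's proof: the Turaev genus bound comes from Hedden's width computation via Inequality~\ref{inequality:HFKTuraev}; the alternation number is pinned to $1$ by observing that changing a clasp crossing yields the unknot (the paper states this explicitly, whereas you hedge slightly) together with non-alternatingness; and the alternating genus claim comes from Proposition~\ref{prop:hyperbolic} once one knows $W_n$ is prime and a satellite. The only cosmetic differences are that the paper proves primality by noting that $W_n$ has Seifert genus one (hence is prime by additivity of genus), and it invokes the torus/satellite/hyperbolic trichotomy in a single phrase rather than separately ruling out the torus and hyperbolic cases as you do---your concern that the satellite step is ``delicate'' is unfounded, as it is a direct consequence of Thurston's classification.
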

\begin{proof} Let $n$ be a positive integer. Changing one of the crossings of the clasp in any Whitehead double transforms the knot into an unknot, and thus the unknotting number $u(W_n)$ is one. Since the unknot is alternating, the inequality $\operatorname{alt}(K)\leq u(K)$ holds for every knot, and hence $\operatorname{alt}(W_n)\leq 1$. The knot $W_n$ is non-alternating since $w(\widehat{HFK}(W_n))>1$, and so $\operatorname{alt}(W_n)=1$.

Inequality \ref{inequality:HFKTuraev} states that for any knot $K$, we have $w(\widehat{HFK}(K))-1\leq g_T(K)$, and hence Proposition \ref{prop:Hedden} implies that $g_T(W_n)\geq n$.

Since the genus of $W_n$ is one, it follows that $W_n$ is prime, and because $W_n$ is a satellite knot, Proposition \ref{prop:hyperbolic} implies that $g_{\operatorname{alt}}(W_n)\neq 1$. Since $W_n$ is non-alternating, we may conclude that $g_{\operatorname{alt}}(W_n)>1$.
\end{proof}

\subsection{Modified torus links}
The modified torus link $\widetilde{T}(p,q)$ is obtained by changing certain crossings of a standard diagram of the $(p,q)$-torus link $T(p,q)$. We use the natural embedding of $T(p,q)$ on a torus to show that $\widetilde{T}(p,q)$ is toroidally alternating for many choices of $p$ and $q$. The behavior of the Rasmussen $s$-invariant and knot signature under crossing changes imply that the Turaev genus and alternation number of $\widetilde{T}(p,q)$ can be arbitrarily large.

Let $B_p$ denote the $p$-stranded braid group, let $\Delta_p\in B_p$ denote the braid $\sigma_1\sigma_2\cdots\sigma_{p-1}$, and let $\widetilde{\Delta}_p$ denote the braid 
$$\widetilde{\Delta}_p = \prod_{i=1}^{p-1} \sigma_{i}^{(-1)^{i+1}} = \sigma_1\sigma_2^{-1}\sigma_3\cdots\sigma_{p-1}^{(-1)^p}.$$
Define $\widetilde{T}(p,q)$ to be the closure of the braid $\Delta_p^{q-1}\widetilde{\Delta}_p$. The link $\widetilde{T}(p,q)$ can be obtained from the diagram of the closure of $\Delta_p^q$ by changing $\lfloor \frac{p-1}{2}\rfloor$ crossings where the closure of $\Delta_p^q$ is a familiar diagram of the $(p,q)$-torus link $T(p,q)$. Define the family $\mathcal{F}(\widetilde{T}(p,q))$ by $$\mathcal{F}(\widetilde{T}(p,q))=\{\widetilde{T}(p,q)~|~ p,q \geq 3, p,q\in\mathbb{Z}\}.$$

Gordon, Litherland, and Murasugi \cite{GLM:TorusSignature} give the following recursive algorithm for computing the signature of the torus link $T(p,q)$.
\begin{theorem}[Gordon, Litherland, Murasugi]
\label{theorem:sigtorus}
Suppose that $p,q>0$. The following recurrence formulas hold.
\begin{enumerate}
\item Suppose that $2p<q$.
	\begin{enumerate}
	\item If $p$ is odd, then $\sigma(T(p,q))=\sigma(T(p,q-2p))-p^2+1$.
	\item If $p$ is even, then $\sigma(T(p,q))=\sigma(T(p,q-2p))-p^2$.
	\end{enumerate}
\item $\sigma(T(p,2p))=1-p^2$.
\item Suppose that $p\leq q < 2p$. 
	\begin{enumerate}
	\item If $p$ is odd, then $\sigma(T(p,q)) = 1 - p^2 - \sigma(T(p,2p-q))$.
	\item If $p$ is even, then $\sigma(T(p,q)) = 2 - p^2 - \sigma(T(p,2p-q))$.
	\end{enumerate}
\item $\sigma(T(p,q))=\sigma(T(q,p)),\quad \sigma(T(p,1))=0,\quad \sigma(T(2,q))=1-q.$
\end{enumerate}
\end{theorem}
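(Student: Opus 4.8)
The plan is to derive a single closed-form expression for $\sigma(T(p,q))$ as a signed lattice-point count and then obtain each recurrence of Theorem \ref{theorem:sigtorus} as a counting identity. I would start from the fiber surface $F_{p,q}$ produced by Seifert's algorithm applied to the positive braid closure $\Delta_p^q$: it consists of $p$ disks joined by $(p-1)q$ bands, so $H_1(F_{p,q})$ has rank $(p-1)(q-1)$ and carries a natural ``brick'' basis. Diagonalizing the symmetrized Seifert form $V+V^{T}$ by the characters of the evident cyclic symmetry (equivalently, reading off the Tristram--Levine data from the monodromy of the Milnor fiber of $x^{p}+y^{q}$) shows that its nonzero eigenvalues are indexed by pairs $(i,j)$ with $1\le i\le p-1$ and $1\le j\le q-1$, and that the eigenvalue attached to $(i,j)$ is negative precisely when $\frac{i}{p}+\frac{j}{q}\bmod 2$ lies in $\left(\frac12,\frac32\right)$ and positive otherwise. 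This gives
\[
\sigma(T(p,q))=\#\Bigl\{(i,j): \tfrac{i}{p}+\tfrac{j}{q}\bmod 2 \notin \bigl(\tfrac12,\tfrac32\bigr)\Bigr\}-\#\Bigl\{(i,j): \tfrac{i}{p}+\tfrac{j}{q}\bmod 2 \in \bigl(\tfrac12,\tfrac32\bigr)\Bigr\},
\]
with the overall sign fixed by the normalization $\sigma(T(2,3))=-2$. Because this description only uses the Milnor fiber, it applies uniformly whether $\gcd(p,q)=1$ (a knot) or $\gcd(p,q)>1$ (a link).

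Granting this formula, the entries of part (4) are immediate: $T(p,1)$ is the unknot, so its box is empty and $\sigma=0$; the symmetry $(p,i)\leftrightarrow(q,j)$ of the count reflects $T(p,q)=T(q,p)$ and yields $\sigma(T(p,q))=\sigma(T(q,p))$; and restricting to the single row $i=1$ recovers $\sigma(T(2,q))=1-q$. The recurrences are then counting identities comparing the lattice box for $(p,q)$ with a smaller box. For part (1) I would compare the box for $(p,q)$ with that for $(p,q-2p)$; passing from $q-2p$ to $q$ widens the box, and because the dividing lines $\frac{i}{p}+\frac{j}{q}\equiv\pm\frac12\pmod 2$ move with $q$, the net signed count picks up a fixed amount, asserted to be $-p^2+1$ for $p$ odd and $-p^2$ for $p$ even. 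For part (3) the reflection $j\mapsto q-j$ carries the box for $(p,q)$ onto the box for $(p,2p-q)$ and interchanges the positive and negative regions up to the points lying on the dividing lines, producing the stated offsets $1-p^2$ (odd $p$) and $2-p^2$ (even $p$); part (2) is the self-dual case $q=2p$ of this reflection, where the count collapses to $1-p^2$.

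The main obstacle is the bookkeeping of the lattice points that lie exactly on the dividing lines $\frac{i}{p}+\frac{j}{q}\equiv\frac12\pmod 1$. Such a point corresponds to $-1$ being a root of the Alexander polynomial, hence to a zero eigenvalue of $V+V^{T}$, and must be discarded from both counts; but these degenerate points are precisely what generate the parity-dependent constants, since whether $pq$ is even or odd controls how many of them occur and whether the fixed point of the reflection $j\mapsto q-j$ is itself a lattice point. Separating the ``generic'' interior count (a parity-free computation) from this boundary correction is where all the care is needed, and getting the constants $-p^2+1$ versus $-p^2$, and $1-p^2$ versus $2-p^2$, correct is the crux. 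A more self-contained alternative avoids the Milnor fiber altogether: block-decompose $V+V^{T}$ for $T(p,q)$ so that one block is congruent to the form of $T(p,q-2p)$ and diagonalize the complementary block by an explicit Witt cancellation. This replaces the eigenvalue computation by a matrix computation, but the same parity bookkeeping resurfaces as the determination of the signature of the complementary block.
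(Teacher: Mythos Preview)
The paper does not prove this theorem. Theorem~\ref{theorem:sigtorus} is quoted from Gordon, Litherland, and Murasugi \cite{GLM:TorusSignature} and used as a black box in the proof of Proposition~\ref{proposition:ModifiedTorus}; there is no argument for it in the paper to compare your proposal against.

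That said, your outline is a reasonable route to an independent proof. The closed formula you describe---expressing $\sigma(T(p,q))$ as a signed count over the lattice box $\{1,\dots,p-1\}\times\{1,\dots,q-1\}$ according to whether $\tfrac{i}{p}+\tfrac{j}{q}$ lies in a prescribed interval modulo $2$---is the standard Brieskorn/Hirzebruch description coming from the Milnor fiber, and once it is in hand the recurrences really do reduce to combinatorial identities of the sort you sketch. Your own diagnosis of the difficulty is accurate: the parity-dependent constants arise entirely from lattice points on the dividing lines (equivalently, zero eigenvalues of $V+V^{T}$), and you have not actually carried out that bookkeeping, only identified where it lives. As written this is a plan, not a proof; to complete it you would need to execute the boundary count explicitly in each parity case and verify the asserted offsets. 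The original Gordon--Litherland--Murasugi argument proceeds differently, working more directly with Goeritz/Seifert matrices and cabling, so if you finish your version it would constitute a genuinely alternative derivation rather than a reconstruction of theirs.
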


In order to estimate the bounds in Inequalities \ref{inequality:abealt} and \ref{inequality:DLRasSig}, we first estimate the signature and Rasmussen invariant for torus knots and the modified torus knots $\widetilde{T}(p,q)$.
\begin{proposition}
\label{proposition:ModifiedTorus}
Let $p$ and $q$ be relatively prime integers with $p\geq 3$ and $q\geq 3$. Then
\begin{align}
\label{ineq1}
& -(p-1)(p-2) - \frac{1}{2}pq  \leq  \sigma(T(p,q))  \leq  (p-1)(p-2) - \frac{1}{2}(p-1)q, \\
\label{ineq2}
&-(p-1)(p-2) - \frac{1}{2}pq  \leq  \sigma(\widetilde{T}(p,q))  \leq  (p-1)^2 - \frac{1}{2}(p-1)q,~\text{and}\\
\label{ineq3}
& pq-2p-q+2  \leq  s(\widetilde{T}(p,q))  \leq  pq-p-q+1.
\end{align}
\end{proposition}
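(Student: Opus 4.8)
The plan is to prove the signature estimate (\ref{ineq1}) for the torus knots $T(p,q)$ first, and then to deduce (\ref{ineq2}) and (\ref{ineq3}) for the modified torus knots by tracking how $\sigma$ and $s$ change under the crossing changes that convert $T(p,q)$ into $\widetilde{T}(p,q)$. The estimate (\ref{ineq1}) is the heart of the matter; once it is in hand, (\ref{ineq2}) and (\ref{ineq3}) are short.

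For (\ref{ineq1}) I would induct using the Gordon--Litherland--Murasugi recurrence of Theorem \ref{theorem:sigtorus}. The crucial observation is that the linear term $-\tfrac12 pq$ is tuned precisely to that recurrence: fixing $p$ and setting $f(q) = \sigma(T(p,q)) + \tfrac12 pq$, part (1) of Theorem \ref{theorem:sigtorus} gives $f(q) = f(q-2p)$ when $p$ is even and $f(q) = f(q-2p)+1$ when $p$ is odd, because passing from $q$ to $q-2p$ lowers $\tfrac12 pq$ by exactly $p^2$. Hence $f$ is nondecreasing in $q$ and gains at most $1$ over each window of length $2p$, so $f(q) \le f(q_0) + \tfrac{q-q_0}{2p}$, where $q_0 \in (0,2p)$ is the reduction of $q$ modulo $2p$. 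Since the upper bound in (\ref{ineq1}) permits $f$ to grow like $\tfrac12 q$ while the recurrence forces growth only at rate $1/(2p)$, and since the lower bound is constant in $q$ while $f$ is nondecreasing, both halves of (\ref{ineq1}) for arbitrary $q$ follow once they are established for the base range $0<q_0<2p$.

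To handle the base range I would apply part (3) of Theorem \ref{theorem:sigtorus} to pass from $T(p,q_0)$ with $p<q_0<2p$ to $T(p,2p-q_0)$ with $2p-q_0<p$, and the symmetry $\sigma(T(p,q))=\sigma(T(q,p))$ of part (4) to exchange the two parameters. Organizing all of this as a single induction on $p+q$ that simultaneously treats both orderings of each coprime pair works, with the two orderings linked by the elementary comparison that for $q<p$ the bounds for $T(q,p)$ stated in terms of $q$ imply those for $T(p,q)$ stated in terms of $p$; writing $g(x)=(x-1)(x-2)$ one has $g(q)-g(p)=(q-p)(q+p-3)$, and the comparison reduces to $p+q\ge 4$. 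The pairs with $\min(p,q)\in\{1,2\}$ are immediate from $\sigma(T(1,q))=0$ and $\sigma(T(2,q))=1-q$ and anchor the induction.

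Granting (\ref{ineq1}), the remaining estimates follow from the crossing-change inequalities. By construction $\widetilde{T}(p,q)$ is obtained from the all-positive diagram of $T(p,q)$ by switching the $\lfloor\tfrac{p-1}{2}\rfloor$ crossings coming from the even-indexed generators of $\Delta_p$, each a change from a positive to a negative crossing. Applying Inequality \ref{inequality:sigcross} once per switch gives $\sigma(T(p,q)) \le \sigma(\widetilde{T}(p,q)) \le \sigma(T(p,q)) + 2\lfloor\tfrac{p-1}{2}\rfloor$; since $2\lfloor\tfrac{p-1}{2}\rfloor \le p-1$, the lower bound of (\ref{ineq2}) is inherited from (\ref{ineq1}) and its upper bound becomes $(p-1)(p-2) + (p-1) - \tfrac12(p-1)q = (p-1)^2 - \tfrac12(p-1)q$. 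Likewise Inequality \ref{inequality:Rasmussen} gives $s(T(p,q)) - 2\lfloor\tfrac{p-1}{2}\rfloor \le s(\widetilde{T}(p,q)) \le s(T(p,q))$, and substituting Rasmussen's value $s(T(p,q)) = (p-1)(q-1)$ for positive torus knots \cite{Rasmussen:KhovanovSlice} together with $2\lfloor\tfrac{p-1}{2}\rfloor \le p-1$ yields (\ref{ineq3}).

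I expect the only real difficulty to lie in (\ref{ineq1}): setting up a single well-founded induction that simultaneously accommodates the even/odd split in Theorem \ref{theorem:sigtorus}, the reduction of the base range $0<q_0<2p$ through part (3), and the role reversal of $p$ and $q$ under part (4), all while respecting the asymmetry of the two bounds. The crossing-change passages to (\ref{ineq2}) and (\ref{ineq3}) are routine once (\ref{ineq1}) and the value of $s(T(p,q))$ are available.
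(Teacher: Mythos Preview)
Your treatment of (\ref{ineq2}) and (\ref{ineq3}) matches the paper's exactly: both pass from $T(p,q)$ to $\widetilde{T}(p,q)$ via $\lfloor\tfrac{p-1}{2}\rfloor$ positive-to-negative crossing switches, invoke Inequalities \ref{inequality:sigcross} and \ref{inequality:Rasmussen}, and use Rasmussen's value $s(T(p,q))=(p-1)(q-1)$.

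For (\ref{ineq1}) your route is genuinely different. The paper does \emph{not} carry out an induction via Theorem \ref{theorem:sigtorus} alone. Instead it brings in two external results to handle the base range in one stroke: Rudolph's theorem that closures of positive braids have nonpositive signature gives $\sigma(T(p,k))\leq 0$ for $0<k<p$, and the Kronheimer--Mrowka computation $u(T(p,k))=\tfrac12(p-1)(k-1)$ combined with $|\sigma|\leq 2u$ gives $\sigma(T(p,k))\geq -(p-1)(p-2)$. With $-(p-1)(p-2)\leq\sigma(T(p,k))\leq 0$ in hand, the paper writes $q=np+r$, iterates parts (1) and (3) of Theorem \ref{theorem:sigtorus} into a closed formula for $\sigma(T(p,q))$ in terms of $\sigma(T(p,r))$ or $\sigma(T(p,p-r))$, and the bounds drop out by direct estimation. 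Your scheme avoids Rudolph and Kronheimer--Mrowka entirely and is thus more self-contained, at the cost of a more delicate induction: in the step through part (3), one must feed in the bounds for the pair $(r,p)$ with $r=2p-q_0$ written with $r$ as first parameter (the tighter of the two orderings), since the bounds written with $p$ first are too weak to recover the lower inequality for $(p,q_0)$. Your sketch is consistent with this, but it is worth making explicit when you write out the details.
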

\begin{proof}
Let $k$ be an integer relatively prime to $p$ with $0 < k < p$. Rudolph \cite{Rudolph:PositiveSignature} shows that the signature of the closure of a positive braid is negative, and hence $\sigma(T(p,k))\leq 0$. The unknotting number $u(K)$ of any knot $K$ satisfies $|\sigma(K)|\leq 2u(K)$.
Kronheimer and Mrowka \cite{KronMrowka:Gauge} show that the unknotting number of $T(p,k)$ is $\frac{1}{2}(p-1)(k-1)$, and thus 
\begin{equation}
\label{inequality:torussig}
-(p-1)(p-2)\leq \sigma(T(p,k))\leq 0.
\end{equation}

Let $q=np+r$ where $n$ is a non-negative integer and $0<r<p$. Suppose that $n=0$. Then $(p-1)(p-2) - \frac{1}{2}(p-1)q = (p-1)(p-2)-\frac{1}{2}(p-1)r \geq 0$ because $\frac{1}{2}r\leq p-2$. Thus
\begin{align*}
-(p-1)(p-2) - \frac{1}{2}pq & = -(p-1)(p-2) -\frac{1}{2}pr\\
& < -(p-1)(p-2)\\
&\leq \sigma(T(p,q))\\
&\leq 0\\
&\leq (p-1)(p-2) - \frac{1}{2}(p-1)q,
\end{align*}
and Inequality \ref{ineq1} is proven when $n=0$.

Now suppose that $n>0$. Repeated applications of Theorem \ref{theorem:sigtorus} yields
\begin{equation}
\label{equation:sigtorus}
\sigma(T(p,np+r)) = \begin{cases}
\sigma(T(p,r))-\frac{1}{2}n(p^2-1) & p~\text{odd,}~n~\text{even},\\
\sigma(T(p,r))- \frac{1}{2}np^2 & p~\text{even,}~n~\text{even},\\
-\sigma(T(p,p-r))-\frac{1}{2}(n+1)(p^2-1) & ~p~\text{odd,}~n~\text{odd},\\
-\sigma(T(p,p-r))-\frac{1}{2}(n+1)p^2 + 2& ~p~\text{even,}~n~\text{odd}.
\end{cases}
\end{equation}
Since $0< r < p$ and $0< p-r < p$, Inequality \ref{inequality:torussig} implies that $-(p-1)(p-2)\leq \sigma(T(p,r))\leq 0$ and $-(p-1)(p-2)\leq \sigma(T(p,p-r))\leq 0$. Combining these inequalities with Equation \ref{equation:sigtorus} yields the inequalities
$$-(p-1)(p-2) - \frac{1}{2}np^2 - 1\leq \sigma(T(p,q)) \leq (p-1)(q-1) - \frac{1}{2}(n+1)(p^2-1).$$
Since $q=np+r$, it follows that $pq=np^2 + rp > np^2+2$, and hence 
$$-(p-1)(p-2) - \frac{1}{2}pq < -(p-1)(p-2) - \frac{1}{2}np^2 -1 \leq \sigma(T(p,q)).$$
Likewise, since $q=np+r$, it follows that 
\begin{align*}
(p-1)q & = np^2 + rp -np - r\\
& \leq np^2 + p^2 - n -1\\
&=(n+1)(p^2-1).
\end{align*}
Hence
$$\sigma(T(p,q))\leq (p-1)(q-1) - \frac{1}{2}(n+1)(p^2-1) \leq (p-1)(p-2) - \frac{1}{2}(p-1)q,$$
and thus Inequality \ref{ineq1} is proven for all $n$.

Since $\widetilde{T}(p,q)$ can be obtained from $T(p,q)$ via $\lfloor\frac{p-1}{2}\rfloor$ crossing changes, Inequality \ref{ineq2} follows from Inequalities \ref{inequality:sigcross} and \ref{ineq1}. Rasmussen computes the $s$-invariant for positive knots, and applying his formula to $T(p,q)$ yields $s(T(p,q)) = pq-p-q+1$. Inequality \ref{ineq3} follows from this fact and Inequality \ref{inequality:Rasmussen}.
\end{proof}

\begin{corollary}
\label{cor:modTor}
Let $p$ be a fixed integer with $p\geq 3$. For any positive integer $n$, there exists a $q$ relatively prime to $p$ such that $\alt(\widetilde{T}(p,q))\geq n$ and $g_T(\widetilde{T}(p,q))\geq n$.
\end{corollary}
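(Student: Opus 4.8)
The plan is to bound both $\alt$ and $g_T$ from below by the quantity $\tfrac12|s+\sigma|$ and then to show that this quantity grows without bound as $q$ increases. Applying Inequalities \ref{inequality:abealt} and \ref{inequality:DLRasSig} to the knot $\widetilde{T}(p,q)$ (which is indeed a knot when $\gcd(p,q)=1$, since $\widetilde\Delta_p$ and $\Delta_p$ induce the same permutation and so the closure of $\Delta_p^{q-1}\widetilde\Delta_p$ has the same number of components as $T(p,q)$) gives
\[
\alt(\widetilde{T}(p,q))\geq \tfrac12\bigl|s(\widetilde{T}(p,q))+\sigma(\widetilde{T}(p,q))\bigr|
\quad\text{and}\quad
g_T(\widetilde{T}(p,q))\geq \tfrac12\bigl|s(\widetilde{T}(p,q))+\sigma(\widetilde{T}(p,q))\bigr|.
\]
Thus it suffices to exhibit, for each positive integer $n$, some $q$ coprime to $p$ with $|s(\widetilde{T}(p,q))+\sigma(\widetilde{T}(p,q))|\geq 2n$.

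Next I would combine the one-sided estimates of Proposition \ref{proposition:ModifiedTorus}. The key observation is that $s$ grows like $pq$ while $\sigma$ can drop only as far as $-\tfrac12 pq$, so their sum is dominated by a positive multiple of $pq$. Concretely, adding the lower bound for $s$ from Inequality \ref{ineq3} to the lower bound for $\sigma$ from Inequality \ref{ineq2} yields
\[
s(\widetilde{T}(p,q))+\sigma(\widetilde{T}(p,q))\geq (pq-2p-q+2)+\bigl(-(p-1)(p-2)-\tfrac12 pq\bigr)=\tfrac{p-2}{2}\,q-p(p-1).
\]
Since $p\geq 3$, the coefficient $\tfrac{p-2}{2}$ of $q$ is strictly positive, so the right-hand side tends to $+\infty$ with $q$; in particular it is eventually positive, whence $|s+\sigma|=s+\sigma\geq \tfrac{p-2}{2}q-p(p-1)$ for all large $q$.

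To finish, I would simply choose $q$. Because there are infinitely many integers coprime to $p$ (for instance the primes exceeding $p$), one can pick such a $q$ as large as desired, in particular with $\tfrac{p-2}{2}q-p(p-1)\geq 2n$. For this $q$ we then have $|s+\sigma|\geq 2n$, and the displayed inequalities give $\alt(\widetilde{T}(p,q))\geq n$ and $g_T(\widetilde{T}(p,q))\geq n$ simultaneously.

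All the substantive content is already packaged inside Proposition \ref{proposition:ModifiedTorus}, so this corollary is essentially a routine linear-in-$q$ estimate. The only points demanding care are bookkeeping ones: selecting the correct pair of one-sided bounds (the lower bound on $s$ together with the lower bound on $\sigma$, so that the $+\tfrac12 pq$ term survives rather than cancels) and confirming that suitable coprime values of $q$ exist so that $\widetilde{T}(p,q)$ remains a knot with well-defined $s$ and $\sigma$. I expect no genuine obstacle beyond verifying the arithmetic of the combined bound.
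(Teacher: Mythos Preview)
Your argument is correct and follows essentially the same route as the paper: both combine the lower bound on $s$ from Inequality~\ref{ineq3} with the lower bound on $\sigma$ from Inequality~\ref{ineq2}, obtain the same linear-in-$q$ estimate (your $\tfrac{p-2}{2}q - p(p-1)$ is just a simplification of the paper's $\tfrac12 pq - 2p - q + 2 - (p-1)(p-2)$), and then invoke Inequalities~\ref{inequality:abealt} and~\ref{inequality:DLRasSig}. Your explicit verification that $\widetilde{T}(p,q)$ is a knot when $\gcd(p,q)=1$ is a nice addition that the paper leaves implicit.
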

\begin{proof}
Proposition \ref{proposition:ModifiedTorus} implies that if $p$ and $q$ are relatively prime with $p\geq 3$ and $q\geq 3$ then
$$\frac{1}{2}pq-2p-q+2-(p-1)(p-2) \leq s(\widetilde{T}(p,q))+\sigma(\widetilde{T}(p,q))$$
If $p$ is fixed and $q$ goes to infinity, then the left hand side of the previous inequality is eventually positive and grows without bound. Therefore for sufficiently large values of $q$ (with $p$ and $q$ relatively prime), we have
$$\frac{1}{2}pq-2p-q+2-(p-1)(p-2) \leq |s(\widetilde{T}(p,q))+\sigma(\widetilde{T}(p,q))|.$$
Hence inequalities \ref{inequality:abealt} and \ref{inequality:DLRasSig} imply that $\operatorname{alt}(\widetilde{T}(p,q))$ and $g_T(\widetilde{T}(p,q))$ can be arbitrarily large.
\end{proof}

An alternate proof that $g_T(\widetilde{T}(p,q))$ can be arbitrarily large uses Inequality \ref{ineq:TuraevKhov} and a computation of the Khovanov width of $\widetilde{T}(p,q)$. While the alternation number and Turaev genus of $\widetilde{T}(p,q)$ grow without bound, the next proposition shows that many such knots are either alternating or toroidally alternating.

\begin{proposition}
\label{prop:AltGenModTor}
Let $p$ be an even integer and let $q$ be an odd integer with $p\geq 4$, $q\geq 3$, and $q\neq 1 \mod p$. Then
$$g_{\alt}(\widetilde{T}(p,q))\leq 1.$$
\end{proposition}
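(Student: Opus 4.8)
The plan is to show that $\widetilde{T}(p,q)$ is toroidally alternating, so that its alternating genus is at most $1$; since being toroidally alternating means admitting an alternating projection to a genus-one Heegaard surface whose complementary regions are disks, this gives exactly $g_{\alt}(\widetilde{T}(p,q))\leq 1$. The starting point is the observation flagged in the text: the $(p,q)$-torus link sits on a Heegaard torus $\Sigma$ as a crossingless $(p,q)$-curve, and the braid $\Delta_p^q$ whose closure is $T(p,q)$ is realized by this crossingless winding. I would represent $\widetilde{T}(p,q)$, the closure of $\Delta_p^{q-1}\widetilde{\Delta}_p$, on $\Sigma$ by keeping the $\Delta_p^{q-1}$ factor as the crossingless winding and localizing the final factor $\widetilde{\Delta}_p$ in a band $B\subset\Sigma$ running once around the longitude. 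Because $\widetilde{\Delta}_p=\sigma_1\sigma_2^{-1}\sigma_3\cdots$ disagrees with $\Delta_p$ precisely through its alternating signs, the strands passing through $B$ cannot be absorbed into the smooth winding and must instead cross; the crossings coming from this cascade are the only crossings of the resulting projection $\pi(\widetilde{T}(p,q))$.

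With this diagram in hand, the proof reduces to two verifications. First, I would check that the projection is alternating. A strand cascading through the band $B$ meets the crossings of $\widetilde{\Delta}_p$ consecutively, and because the signs of $\widetilde{\Delta}_p$ alternate between $\sigma_i$ and $\sigma_i^{-1}$, the over--under behavior along such a cascade alternates as well; the remaining task is to confirm that the pattern stays alternating when a strand leaves $B$, winds around $\Sigma$, and re-enters. Second, I would check that $\Sigma-\pi(\widetilde{T}(p,q))$ is a disjoint union of disks: outside $B$ the parallel winding strands cut the complementary annulus into strips, and the cascade crossings inside $B$ must subdivide these strips so that no region survives as an annulus.

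The hypotheses are exactly what make both verifications succeed. Requiring $p$ to be even fixes the parity of the sign pattern of $\widetilde{\Delta}_p$ (so that $\widetilde{\Delta}_p$ ends in $\sigma_{p-1}^{+1}$), which is what allows the alternating over--under pattern to close up consistently after winding; requiring $q$ to be odd controls the parity of the winding, and the condition $q\not\equiv 1\pmod p$ guarantees that a strand re-enters $B$ in a shifted position, so that every complementary region is cut into a disk rather than leaving an annulus. I expect the main obstacle to be precisely the global bookkeeping in these two checks: tracking how the $\gcd(p,q)$ components thread through the band and close up around $\Sigma$, and verifying in all cases---most delicately near the excluded congruence $q\equiv 1\pmod p$---that alternation is preserved and no annular region remains. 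A carefully labeled picture of the band $B$ together with the winding on $\Sigma$ should reduce each of these local-to-global checks to a routine inspection.
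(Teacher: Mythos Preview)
Your proposal is correct and follows essentially the same route as the paper: place the $\Delta_p^{q-1}$ portion crossinglessly on a Heegaard torus, insert $\widetilde{\Delta}_p$ in a band, and then verify (i) alternation via a parity argument on strand positions (using $p$ even and $q-1$ even) and (ii) disk complements via the condition $q\not\equiv 1\pmod p$. The paper's argument makes the parity check slightly more explicit---labeling incoming/outgoing strands of $\widetilde{\Delta}_p$ and noting that $\Delta_p^{q-1}$ sends odd-labeled strands to odd and even to even---and for the disk check uses that $\Sigma-T(p,q-1)$ has $\gcd(p,q-1)<p$ annuli (rather than tracking $\gcd(p,q)$ link components), but these are refinements of exactly the plan you outline.
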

\begin{proof}
Let $\Sigma$ be a Heegaard torus in $S^3$ and suppose that $T(p,q-1)$ is embedded on $\Sigma$ by going $p$ times around the longitude and $q-1$ times around the meridian. Cut $\Sigma$ along a meridian to obtain a cylinder $C_1$. Suppose a standard planar diagram of $\widetilde{\Delta}_p$ is embedded on a cylinder $C_2$ so that the incoming strands meet one boundary component and the outgoing strands meet the other. Identify each boundary component of $C_1$ with a boundary component of $C_2$ so that the resulting surface $\Sigma$ is again a Heegaard torus in $S^3$ with a diagram of $\widetilde{T}(p,q)$ projected onto it (via some projection $\pi$). 

Before we glue in the cylinder $C_2$ each component of $\Sigma - T(p,q-1)$ is an annulus, and since $q\neq p \mod 1$, the number of components of $\Sigma-T(p,q-1)$ is strictly less than $p$. By surgering in the diagram of $\widetilde{\Delta}_p$, the annuli are separated into a disjoint union of disks.

Label the incoming and outgoing strands of $\widetilde{\Delta}_p$ by $1,2,\dots, p$ from left to right. The incoming strands labeled with odd numbers encounter an under-crossing first, while the incoming strands labeled with even integers encounter an over-crossing first. For the outgoing strands, the situation is reversed. Outgoing strands labeled with odd numbers most recently encountered over-crossings, while outgoing strands labeled with even integers most recently encountered under-crossings. Since $p$ is even and $q-1$ is even, the permutation on the strands induced by $\Delta_p^{q-1}$ sends strands labeled with odd numbers to strands labeled with odd numbers and sends strands labeled with even numbers to strands labeled with even numbers. Therefore, the projection $\pi(\widetilde{T}(p,q))$ is alternating on $\Sigma$.

Since $\widetilde{T}(p,q)$ has an alternating projection to a Heegaard torus where the complement of the projection is a disjoint union of disks, it follows that $g_{\alt}(\widetilde{T}(p,q))\leq 1$.
\end{proof}

Figure \ref{figure:TorAlt} shows that $\widetilde{T}(4,3)$ and $\widetilde{T}(4,4k+3)$ for non-negative $k$ have alternating projections to Heegaard tori in $S^3$ whose complements are disjoint unions of disks. 
\begin{figure}[h]
$$\begin{tikzpicture}[scale = .8]
\begin{pgfonlayer}{background}
\fill [white](2.5,20) ellipse (2.5cm and .5cm);
\end{pgfonlayer}

\draw (2.5,20) ellipse (2.5cm and .5cm);
\draw (0,9) arc (180:360:2.5cm and .5cm);
\draw (0,9) -- (0,20);
\draw (5,9) -- (5,20);

\begin{pgfonlayer}{background2}
\begin{scope}[thick, rounded corners = 4mm]
	\draw (1,20) -- (1,19) -- (0,18);
	\draw [dashed] (0,18) -- (5,17);
	\draw (5,17) -- (4,16) -- (3,13) -- (3,12) -- (2.7,11.7);
	\draw (2.3, 11.3) -- (2,11) -- (2,8.5);
	\draw (2,20) -- (2,19) -- (1,16) -- (0,15);
	\draw [dashed] (0,15) -- (5,14);
	\draw (3,20) -- (3,19) -- (2,16) -- (1,13) -- (1.3,12.7);
	\draw (4,20) -- (4,19) -- (3,16) -- (2,13) -- (1,12) -- (1,8.6);
	\draw (5,14) -- (4,13) -- (4,11) -- (3,10) -- (3,8.5);
	\draw (1.7,12.3) -- (3.3, 10.7);
	\draw (3.7, 10.3) -- (4,10) -- (4,8.6);
\end{scope}
\end{pgfonlayer}

\draw (2.5, 21.5) node{$\widetilde{T}(4,3)$};

\begin{scope}[xshift = 10cm]
\begin{pgfonlayer}{background}
\fill [white](2.5,20) ellipse (2.5cm and .5cm);
\end{pgfonlayer}

\draw (2.5,20) ellipse (2.5cm and .5cm);

\draw (0,19) arc (180:360:2.5cm and .5cm);
\draw (0,14) arc (180:360:2.5cm and .5cm);

\draw (0,9) arc (180:360:2.5cm and .5cm);
\draw (0,9) -- (0,20);
\draw (5,9) -- (5,20);

\begin{pgfonlayer}{background2}
\begin{scope}[thick, rounded corners = 4mm]
	\draw (1,20) -- (1, 18.6);
	\draw (2,20) -- (2, 18.5);
	\draw (3,20) -- (3, 18.5);
	\draw (4,20) -- (4,18.6);
	\draw (1.3,12.7) -- (1,13) -- (1,13.6);
	\draw (2,13.4) --  (2,13) -- (1,12) -- (1,8.6);
	\draw (3,13.4) -- (3,13) -- (3,12) -- (2.7,11.7);
	\draw (4,13.6) -- (4,11) -- (3,10) -- (3,8.5);
	\draw (2.3, 11.3) -- (2,11) -- (2,8.5);
	\draw (1.7,12.3) -- (3.3, 10.7);
	\draw (3.7, 10.3) -- (4,10) -- (4,8.6);
	
	\draw (2.5,16.8) node{$4k+2$};
	\draw (2.5,16.2) node{full twists};
\end{scope}
\end{pgfonlayer}

\draw (2.5, 21.5) node{$\widetilde{T}(4,4k+3)$};

\end{scope}

\end{tikzpicture}$$
\caption{After identifying the two components of the boundary of each cylinder to obtain tori, the diagram on the left is an alternating projection of $\widetilde{T}(4,3)$ to the torus, and the diagram on the right is an alternating projection of $\widetilde{T}(4,4k+3)$ to the torus for each $k\geq 0$.}
\label{figure:TorAlt}
\end{figure}
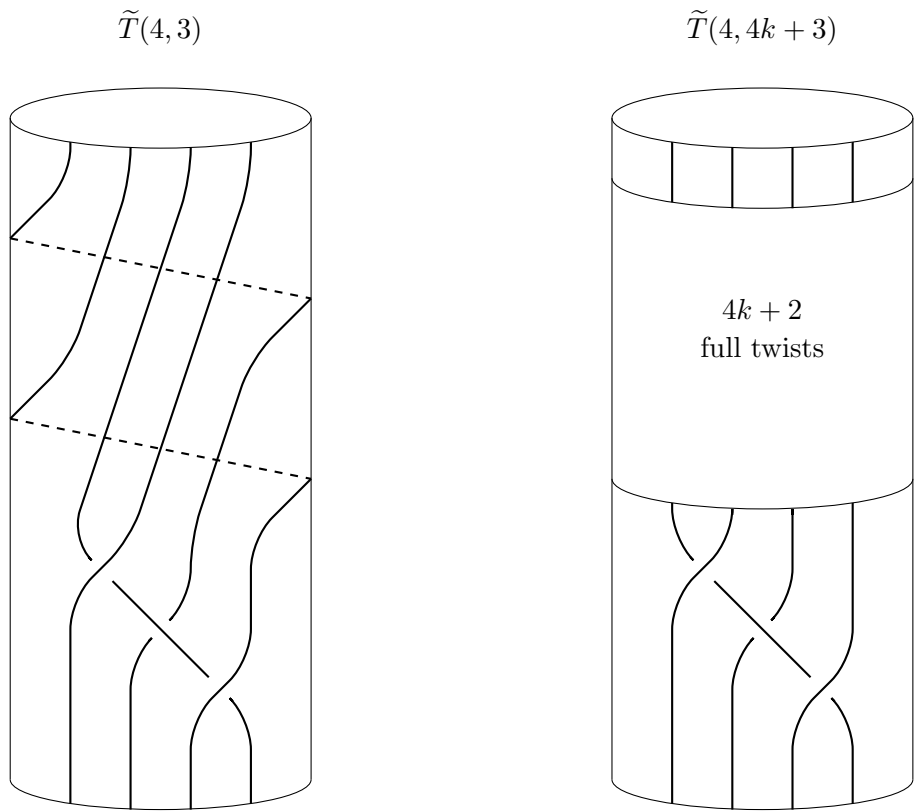

\subsection{The $(3,q)$-torus knots} Let $\mathcal{F}(T(3,q))$ be the family of $(3,q)$-torus links where $q$ is any integer. The $(3,q)$-torus knots have arbitrarily large alternation number, Turaev genus, $c(T(3,q)) - \Span_{T(3,q)}(t)$, and dealternating number, but have warping span $0$ or $1/2$. Kanenobu \cite{Kanenobu:Alternation} computes the alternation numbers of the $(3,q)$-torus knots, up to an additive error of at most one.
\begin{proposition}[Kanenobu]
\label{prop:AltTor3}
For any positive integer $n$,
\begin{align*}
&\alt(T(3,4))=\alt(T(3,5))=1,\\
& \alt(T(3,6n+1))=\alt(T(3,6n+2))=2n,\\
& \alt(T(3,6n+4))=\alt(T(3,6n+5))=2n~\text{or}~2n+1.
\end{align*}
\end{proposition}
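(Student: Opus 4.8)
The plan is to bound $\alt(T(3,q))$ from below and above, matching the two bounds except in the residue classes $q\equiv 4,5\pmod 6$, where a genuine gap of one persists.

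\textbf{Lower bound.} The engine is Abe's inequality \ref{inequality:abealt}, $|s(K)+\sigma(K)|\leq 2\alt(K)$, so I need $s$ and $\sigma$ for $T(3,q)$. Since $T(3,q)$ is the closure of the positive braid $(\sigma_1\sigma_2)^q$, Rasmussen's formula for positive knots (as applied in Proposition \ref{proposition:ModifiedTorus}) gives $s(T(3,q))=2q-2$. For the signature I would run the Gordon--Litherland--Murasugi recursion of Theorem \ref{theorem:sigtorus} with $p=3$: the base values $\sigma(T(3,1))=0$, $\sigma(T(3,2))=-2$, $\sigma(T(3,4))=-6$, $\sigma(T(3,5))=-8$, together with the reduction $\sigma(T(3,q))=\sigma(T(3,q-6))-8$ (valid once $q>6$, as $p=3$ is odd), yield for each residue $j\in\{1,2,4,5\}$ a closed formula linear in $n$ when $q=6n+j$. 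Substituting into $s+\sigma$, I expect $s(T(3,6n+j))+\sigma(T(3,6n+j))=4n$ in all four cases, so \ref{inequality:abealt} forces $\alt(T(3,6n+j))\geq 2n$. For the two exceptional knots $T(3,4)$ and $T(3,5)$ (the case $n=0$) this only gives $\alt\geq 0$; there I would instead invoke that both are non-alternating (they are the thick knots $8_{19}$ and $10_{124}$), whence $\alt\geq 1$.

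\textbf{Upper bound.} Here I would exhibit explicit crossing changes carrying the standard diagram of $T(3,q)$ to a diagram of an alternating link. A natural target is a $(2,k)$-torus link, which is alternating and satisfies $s+\sigma=0$; the idea is to switch crossings among the $\sigma_2$ generators of $(\sigma_1\sigma_2)^q$ so that after cancellation the three-strand braid collapses onto two strands. Signature bookkeeping dictates how economical this can be: a single crossing change alters $s$ by at most $2$ and $\sigma$ by at most $2$ in opposite directions (Inequalities \ref{inequality:Rasmussen} and \ref{inequality:sigcross}), so $s+\sigma$ moves by at most $2$ per change. This both reconfirms the lower bound of $2n$ and pins down which crossings (the positive ones) one must switch. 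I expect the resulting count to be $2n$ when $q\equiv 1,2\pmod 6$ and $2n+1$ when $q\equiv 4,5\pmod 6$; for the two small knots one crossing change to an alternating knot gives $\alt\leq 1$, matching the lower bound there.

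\textbf{Main obstacle.} Assembling the bounds yields equality $2n$ for $q\equiv 1,2$ and the range $\{2n,2n+1\}$ for $q\equiv 4,5$, exactly as stated. The genuinely hard step is the upper bound: one must design the crossing changes explicitly and verify that the output diagram is (a diagram of) an \emph{alternating} link, not merely a knot whose signature obstruction happens to vanish. Moreover the leftover ambiguity of one in the classes $6n+4$ and $6n+5$ is not slack in an estimate but reflects a real disagreement between the signature-and-$s$ lower bound of $2n$ and the constructive upper bound of $2n+1$; removing it would demand either a sharper lower bound (say from a width invariant) or a strictly more efficient alternating-izing construction.
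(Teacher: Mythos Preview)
The paper does not supply its own proof of this proposition; it is quoted from Kanenobu's paper \cite{Kanenobu:Alternation} and cited as such. So there is no in-paper argument to compare against.

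That said, your outline is essentially Kanenobu's argument. Your lower-bound computation is correct: with $s(T(3,q))=2q-2$ and the recursion $\sigma(T(3,q))=\sigma(T(3,q-6))-8$ seeded by $\sigma(T(3,1))=0$, $\sigma(T(3,2))=-2$, $\sigma(T(3,4))=-6$, $\sigma(T(3,5))=-8$, one gets $s+\sigma=4n$ uniformly across the four residue classes, whence $\alt\geq 2n$ by Inequality~\ref{inequality:abealt}. The non-alternating status of $T(3,4)=8_{19}$ and $T(3,5)=10_{124}$ handles the base cases.

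The upper bound is, as you note, the substantive step, and here your proposal is more a hope than a construction. Kanenobu's actual upper bound does proceed by explicit crossing changes in the standard braid diagram, but the target is not always a $(2,k)$ torus link; one lands on connected sums of $(2,k)$ torus links or on specific alternating knots, and the bookkeeping that produces exactly $2n$ changes for $q\equiv 1,2\pmod 6$ and $2n+1$ for $q\equiv 4,5\pmod 6$ requires writing down the braid word manipulations case by case. Your heuristic that ``$s+\sigma$ moves by at most $2$ per change'' does not by itself pin down the upper bound, since that observation only reproduces the lower bound. To complete the proof you would need to exhibit the sequences of crossing changes explicitly and verify the resulting links are alternating; this is routine but not automatic from what you have written.
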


Using Inequality \ref{ineq:TuraevKhov} and work of Sto{\v{s}}i{\'c} \cite{Stosic:Torus} and Turner \cite{Turner:Torus}, the author \cite{Lowrance:Twisted} computes the Turaev genus of the $(3,q)$-torus knots. Abe and Kishimoto \cite{Abe:Dealternating} independently compute the Turaev genus of the $(3,q)$-torus knots and also compute their dealternating numbers.
\begin{proposition}[Abe-Kishimoto, Lowrance]
\label{prop:TuraevDaltTor3}
Let $n$ be a non-negative integer, and let $i=1$ or $2$. Then
$$g_T(T(3,3n+i)) = \dalt(T(3,3n+i))=n.$$
\end{proposition}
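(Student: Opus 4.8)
The plan is to trap both invariants between the same upper and lower bound. Inequality \ref{ineq:TuraevDalt} gives $g_T(T(3,3n+i)) \le \dalt(T(3,3n+i))$, and Inequality \ref{ineq:TuraevKhov} gives $w(Kh(T(3,3n+i))) - 2 \le g_T(T(3,3n+i))$. Thus it suffices to establish the two extremal estimates
\[
w(Kh(T(3,3n+i))) = n+2 \qquad\text{and}\qquad \dalt(T(3,3n+i)) \le n,
\]
since together with the two inequalities these yield $n \le g_T(T(3,3n+i)) \le \dalt(T(3,3n+i)) \le n$, forcing both $g_T$ and $\dalt$ to equal $n$. So the entire proposition reduces to one lower bound (on Khovanov width) and one upper bound (on dealternating number).

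For the lower bound I would invoke the Khovanov homology of the $(3,q)$-torus knots computed by Sto{\v{s}}i{\'c} \cite{Stosic:Torus} and Turner \cite{Turner:Torus}. Decomposing their answer along the diagonal grading $\delta = j - 2i$, one checks that the nonzero groups of $Kh(T(3,3n+i))$ occupy exactly $n+2$ consecutive diagonals, all of a single parity, so that $\max\{\delta \mid Kh^\delta \neq 0\} - \min\{\delta \mid Kh^\delta \neq 0\} = 2(n+1)$ and hence $w(Kh(T(3,3n+i))) = n+2$. This is already consistent with the small cases: the trefoil $T(3,2)$ is $Kh$-thin ($w=2$, $n=0$) while $T(3,4)=8_{19}$ is thick with $w=3$ ($n=1$). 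Inequality \ref{ineq:TuraevKhov} then gives $g_T(T(3,3n+i)) \ge n$, which is precisely the Turaev genus computation of \cite{Lowrance:Twisted}.

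For the upper bound I would exhibit an explicit diagram of $T(3,3n+i)$ together with $n$ crossing changes producing an \emph{alternating} diagram, following Abe and Kishimoto \cite{Abe:Dealternating}. Starting from the standard closed-braid diagram of the positive braid $(\sigma_1\sigma_2)^{3n+i}$, I would group the word into $n$ full-twist blocks $(\sigma_1\sigma_2)^3 = \Delta_3^2$ together with the leftover factor $(\sigma_1\sigma_2)^i$, then change one carefully chosen crossing per block and simplify via Reidemeister moves so that the resulting diagram alternates. The base cases $n=0$ (the unknot $T(3,1)$ and the alternating trefoil $T(3,2)$) and $n=1$ (the almost-alternating knots $8_{19}$ and $10_{124}$) anchor the two residues $i=1,2$, and the block structure is what lets the construction scale to general $n$.

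The hard part will be this upper bound. Note that a naive sign flip turning $(\sigma_1\sigma_2)^q$ into the genuinely alternating closure $(\sigma_1\sigma_2^{-1})^q$ would cost $q \approx 3n$ crossing changes, since within a fixed $1,2,1,2,\dots$ generator pattern an alternating braid closure forces every other sign to flip. The economy of the Abe--Kishimoto construction therefore has to come from exploiting the central full-twist structure $\Delta_3^2$ and Reidemeister simplification rather than flipping signs blindly, and the delicate step is verifying that the chosen $n$ crossings really do yield an alternating diagram, uniformly in $n$ and for both residues $i=1,2$. By contrast the lower bound is essentially bookkeeping: reading the spread of the $\delta$-grading off the known Khovanov homology of $T(3,q)$.
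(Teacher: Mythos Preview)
Your proposal is correct and mirrors exactly the approach the paper describes. The paper does not give a self-contained proof of this proposition; it attributes the result to \cite{Abe:Dealternating} and \cite{Lowrance:Twisted}, and the sentence preceding the statement explains that the Turaev genus computation uses Inequality~\ref{ineq:TuraevKhov} together with the Khovanov homology of $T(3,q)$ from Sto{\v{s}}i{\'c} and Turner, while the dealternating number comes from Abe--Kishimoto's explicit construction---precisely the sandwich $n = w(Kh(T(3,3n+i)))-2 \le g_T \le \dalt \le n$ that you outline.
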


Combining work of Jones and Murasugi, we obtain the following result about the difference between the crossing number and the span of the Jones polynomial of $T(3,q)$.
\begin{proposition}[Jones, Murasugi]
\label{prop:SpanTor3}
Suppose that $q$ is relatively prime to $3$ and $q>3$. Then
$$c(T(3,q)) - \Span V_{T(3,q)}(t) = q-1.$$
\end{proposition}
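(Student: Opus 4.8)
The plan is to compute both quantities on the right-hand side explicitly and subtract, using exactly the two cited ingredients. First I would invoke Jones's original computation of the Jones polynomial of a torus knot: for relatively prime $m$ and $n$,
\begin{equation*}
V_{T(m,n)}(t) = \frac{t^{(m-1)(n-1)/2}}{1-t^2}\left(1 - t^{m+1} - t^{n+1} + t^{m+n}\right).
\end{equation*}
Specializing to $m=3$ and $n=q$ gives the prefactor $t^{q-1}$ and the numerator $1 - t^4 - t^{q+1} + t^{q+3}$. The key algebraic observation is that this numerator factors as $(1-t^2)(1+t^2-t^{q+1})$, so the division by $1-t^2$ is exact and
\begin{equation*}
V_{T(3,q)}(t) = t^{q-1}\left(1 + t^2 - t^{q+1}\right) = t^{q-1} + t^{q+1} - t^{2q}.
\end{equation*}
Since $q>3$, the three exponents $q-1$, $q+1$, and $2q$ are distinct, so no cancellation can collapse the support, and the span can be read off directly as $\Span V_{T(3,q)}(t) = 2q - (q-1) = q+1$.

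Next I would determine the crossing number. Here I would cite Murasugi's determination of the crossing number of torus links, which gives $c(T(p,q)) = \min\{p(q-1),\, q(p-1)\}$ for $2 \leq p < q$; the minimizing diagram is the standard closure of $(\sigma_1\sigma_2\cdots\sigma_{p-1})^q$. With $p=3$ and $q>3$ we have $p(q-1) = 3q-3$ and $q(p-1)=2q$, and $2q < 3q-3$ precisely because $q>3$, so $c(T(3,q)) = 2q$. Combining the two computations yields
\begin{equation*}
c(T(3,q)) - \Span V_{T(3,q)}(t) = 2q - (q+1) = q-1,
\end{equation*}
as claimed.

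The Jones polynomial step is the heart of the argument, but once the factorization $1 - t^4 - t^{q+1} + t^{q+3} = (1-t^2)(1+t^2-t^{q+1})$ is spotted it is routine; the only care needed is checking that for $q>3$ the exponents remain in the order $q-1 < q+1 < 2q$ so that the span is genuinely $q+1$. I expect the main obstacle to be invoking the crossing-number value with the correct hypotheses: one must confirm $q>3$ (so that $2q$ rather than $3q-3$ is the relevant minimum) and that coprimality of $3$ and $q$ guarantees $T(3,q)$ is a knot to which Murasugi's formula applies. With those conditions in hand, both inputs are entirely classical and the computation is immediate.
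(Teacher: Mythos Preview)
Your proof is correct and follows essentially the same approach as the paper: invoke Jones's closed formula for $V_{T(p,q)}(t)$, specialize to $p=3$ to obtain $V_{T(3,q)}(t)=t^{q-1}+t^{q+1}-t^{2q}$ and read off the span $q+1$, then cite Murasugi for $c(T(3,q))=2q$ and subtract. The only difference is that you spell out the intermediate factorization and the comparison $2q<3q-3$ explicitly, whereas the paper simply asserts the specialized polynomial and the crossing number.
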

\begin{proof}
Jones \cite{Jones:Hecke} gives the following formula for the Jones polynomial of all torus knots:
$$V_{T(p,q)}(t) = \frac{t^{(p-1)(q-1)/2}}{1-t^2}(1-t^{p+1} - t^{q+1} + t^{p+q}).$$
 For the $(3,q)$-torus knots, this formula yields
$$V_{T(3,q)}(t) = t^{q-1}+t^{q+1}-t^{2q}.$$
Hence $\Span V_{T(3,q)}(t) = q+1$.
Murasugi \cite{Murasugi2} shows that the crossing number of $T(3,q)$ where $q>3$ is $2q$. Therefore
$$c(T(3,q)) - \Span V_{T(3,q)}(t) = q-1.$$
\end{proof}

An example of Shimizu \cite{Shimizu:WarpingPolynomial} is easily generalized to the following result.
\begin{proposition}
\label{prop:WarpTor3}
If $q$ is an integer with $|q|>2$, then $\warp(T(3,q))=\frac{1}{2}$.
\end{proposition}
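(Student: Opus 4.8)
The plan is to show two things: first, that $\warp(T(3,q)) \leq \frac{1}{2}$ by exhibiting a diagram whose edge weights span only the range required, and second, that $\warp(T(3,q)) \geq \frac{1}{2}$ because $T(3,q)$ is non-alternating (so $\warp(T(3,q)) > 0$) and the warping span of any knot is a non-negative half-integer, forcing the minimum to be at least $\frac{1}{2}$. Since $|q|>2$ and $\gcd(3,q)$ is either $1$ or $3$, I will restrict attention to the case where $T(3,q)$ is a genuine knot (and handle the link case, or $q$ divisible by $3$, by the same edge-weight bookkeeping on each component). The lower bound is the easy half: by the proposition establishing that warping span is an alternating distance, $\warp(L)=0$ if and only if $L$ is alternating, and $T(3,q)$ is well known to be non-alternating for $|q|>2$ (for instance because it has positive Turaev genus by Proposition~\ref{prop:TuraevDaltTor3}, or directly because its Jones polynomial span is not $c(L)$). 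Hence $\warp(T(3,q))>0$, and since every weight difference $d_i-d_j$ is an integer, the quantity $\frac{1}{2}\max\{d_i-d_j-1\}$ is a half-integer, so $\warp(T(3,q))\geq\frac{1}{2}$.

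For the upper bound I would take the standard diagram of $T(3,q)$ as the closure of the braid $\Delta_3^{q}=(\sigma_1\sigma_2)^{q}$, drawn exactly as in the generalization of Shimizu's example. First I would choose an orientation and a starting edge, then track the weights $d_i$ along the knot using the conventions of Figure~\ref{figure:weights}: each time the strand passes \emph{over} at a crossing the weight increases by $1$, and each time it passes \emph{under} it decreases by $1$. The key observation for the three-strand torus diagram is that, travelling along the knot, one can arrange (by choosing $e_1$ appropriately) that the running weight $d_i$ takes only the values in a band of width one plus the minimum unavoidable excursion, mirroring the pattern computed by Shimizu in her worked example; one verifies that $\max_i d_i - \min_j d_j$ equals exactly $2$ in this diagram, so that
\begin{equation*}
\warp(D) = \tfrac{1}{2}\bigl(\max\{d_i-d_j-1\}\bigr) = \tfrac{1}{2}(2-1) = \tfrac{1}{2}.
\end{equation*}
This matches the claim that Shimizu's example generalizes: the three-strand structure of $T(3,q)$ means that no matter how long the braid word $(\sigma_1\sigma_2)^q$ is, the over–under pattern along the single knot component forces the weights to oscillate within a fixed window rather than drifting.

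Combining the two halves gives $\frac{1}{2}\leq \warp(T(3,q))\leq \warp(D)=\frac{1}{2}$, hence equality. \textbf{The main obstacle} I anticipate is the explicit weight computation on the standard $T(3,q)$ diagram: one must be careful about the choice of initial edge $e_1$ and orientation, since a poor choice inflates the apparent span even though $\warp(D)$ is independent of those choices, and one must confirm that the oscillation of the weights really is bounded by a width-two band uniformly in $q$ rather than growing. I would handle this by identifying the repeating unit of the braid $(\sigma_1\sigma_2)$, computing the net and extreme weight change across one such unit, and then arguing that the periodicity of the diagram prevents the running weight from escaping the window established in the first period. The remaining subtlety is simply to note that the argument is uniform in the sign of $q$ (the case $q<-2$ follows by mirror symmetry, under which warping span is unchanged) and that when $3\mid q$ the link version of the definition applies component-by-component, each component inheriting the same width-two bound.
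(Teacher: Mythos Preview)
Your proposal is correct and follows essentially the same approach as the paper: exhibit the standard braid closure diagram of $(\sigma_1\sigma_2)^q$, observe that the weights on the repeating unit $\sigma_1\sigma_2$ take only the values $0,1,2$ with matching incoming and outgoing labels, so stacking preserves the width-two window and gives $\warp(D)=\tfrac12$; then invoke non-alternating for $|q|>2$ to get the lower bound. The only cosmetic difference is that the paper handles $q<0$ by directly labeling $(\sigma_1^{-1}\sigma_2^{-1})^{|q|}$ rather than appealing to mirror symmetry.
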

\begin{proof}
Suppose $q>0$. We write $T(3,q)$ as the closure of $(\sigma_1\sigma_2)^q$, and $T(3,-q)$ as the closure of $(\sigma_1^{-1}\sigma_2^{-1})^q$. Figure \ref{figure:torus} depicts the $3$-braids $\sigma_1\sigma_2$ and $\sigma_1^{-1}\sigma_2^{-1}$ with edges labeled by weights $0$, $1$, and $2$. Since the incoming weights are the same as the outgoing weights, these braids can be stacked $q$ times to obtain diagrams of $T(3,q)$ and $T(3,-q)$ where the only weights are $0$, $1$, and $2$. Hence $\warp(T(3,q))=\warp(T(3,-q))\leq\frac{1}{2}$. Since $T(3,q)$ is alternating if and only if $|q|\leq 2$, the result follows.
\end{proof}

\begin{figure}[h]
$$\begin{tikzpicture}[rounded corners = 3mm, scale=.7]
\draw (0,4) -- (.75,3.25);
\draw (1.25, 2.75) -- (2.75, 1.25);
\draw (3.25, .75) -- (4,0);
\draw (2,4) -- (0,2) -- (0,0);
\draw (4,4) -- (4,2) -- (2,0);
\draw (0,4) node[above]{$2$};
\draw (2,4) node[above]{$1$};
\draw (4,4) node[above]{$0$};
\draw (0,0) node[below]{$2$};
\draw (2,0) node[below]{$1$};
\draw (4,0) node[below]{$0$};
\draw (2,2.2) node[above]{$1$};

\begin{scope}[xshift = 8cm]
\draw (0,4) -- (4,0);
\draw (2,4) -- (1.25, 3.25);
\draw (.75, 2.75) -- (0,2) -- (0,0);
\draw (4,4) -- (4,2) -- (3.25,1.25);
\draw (2.75,.75) -- (2,0);
\draw (0,4) node[above]{$0$};
\draw (2,4) node[above]{$1$};
\draw (4,4) node[above]{$2$};
\draw (0,0) node[below]{$0$};
\draw (2,0) node[below]{$1$};
\draw (4,0) node[below]{$2$};
\draw (2,2.2) node[above]{$1$};
\end{scope}
\end{tikzpicture}$$
\caption{The $3$-braids $\sigma_1\sigma_2$ (left) and $\sigma_1^{-1}\sigma_2^{-1}$ (right) with their edges labeled by weights.}
\label{figure:torus}
\end{figure}
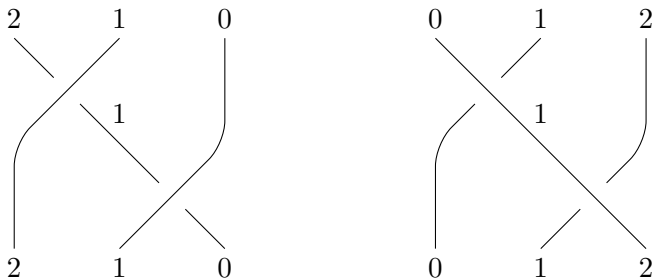

\subsection{Proofs of Theorems \ref{theorem:main1} and \ref{theorem:main2}}

The proofs of Theorems \ref{theorem:main1} and \ref{theorem:main2} are pieced together from the previous three subsections.
\begin{proof}[Proof of Theorem \ref{theorem:main1}]
Proposition \ref{prop:Whitehead} shows that $g_T(W_n) - \alt(W_n)\geq n-1$ and Inequality \ref{ineq:TuraevDalt} shows that $\dalt(W_n)-\alt(W_n)\geq n-1$. Thus $\alt(\mathcal{F}(W_n)) \ll g_T(\mathcal{F}(W_n))$ and $\alt(\mathcal{F}(W_n)) \ll \dalt(\mathcal{F}(W_n))$. Inequality \ref{ineq:TuraevJones} implies that $c(L)-\Span V_L(t)$ dominates the alternation number on $\mathcal{F}(W_n)$.

Fix an even integer $p\geq 4$ and let $q$ range over all positive integers such that $p$ and $q$ are relatively prime, and $q\neq 1 \mod p$. Proposition \ref{prop:AltGenModTor} states that $g_{\alt}(\widetilde{T}(p,q))=1$ while Corollary \ref{cor:modTor} implies that $\alt(\widetilde{T}(p,q))$ and $g_T(\widetilde{T}(p,q))$ go to infinity as $q$ goes to infinity. Hence $g_{\alt}(\mathcal{F}(\widetilde{T}(p,q))) \ll \alt(\mathcal{F}(\widetilde{T}(p,q)))$ and $g_{\alt}(\mathcal{F}(\widetilde{T}(p,q))) \ll g_T(\mathcal{F}(\widetilde{T}(p,q)))$. Both Inequality \ref{eq::altdalt} and Inequality \ref{ineq:TuraevDalt} imply that $g_{\alt}(\mathcal{F}(\widetilde{T}(p,q))) \ll \dalt(\mathcal{F}(\widetilde{T}(p,q)))$. Inequality \ref{ineq:TuraevJones} implies that the difference $c(L) - \Span V_L(t)$ dominates the alternating genus on $\mathcal{F}(\widetilde{T}(p,q))$.

Now let $q$ be a positive integer not divisible by $3$ with $q>3$.  Propositions \ref{prop:AltTor3}, \ref{prop:TuraevDaltTor3}, \ref{prop:SpanTor3}, and \ref{prop:WarpTor3} yield
\begin{align*}
c(T(3,q)) - \Span V_{T(3,q)}(t)& =  q-1,\\
\dalt (T(3,q)) &=  \lfloor q/3 \rfloor,\\
g_T (T(3,q))& =  \lfloor q/3 \rfloor,\\
\alt (T(3,q)) & =  \lfloor q/3 \rfloor~\text{or}~  \lfloor q/3 \rfloor -1,~\text{and}\\
\warp(T(3,q))& =  1/2,
\end{align*}
which implies statements (3) and (4).
\end{proof}

\begin{proof}[Proof of Theorem \ref{theorem:main2}]
Proposition \ref{prop:Whitehead} implies that for the positive $n$-th iterated untwisted Whitehead double $W_n$ of the figure-eight knot, $\alt(W_n)<g_{\alt}(W_n)$ for each positive integer $n$. Furthermore, we saw in the proof of Theorem \ref{theorem:main1} that $g_{\alt}(\mathcal{F}(\widetilde{T}(p,q))) \ll\alt(\mathcal{F}(\widetilde{T}(p,q)))$. Therefore the alternation number and alternating genus of a link are not comparable.
\end{proof}

\section{Further Questions}
\label{section:questions}

Abe \cite{Abe:Dealternating} showed that $g_T(D)\leq \dalt(D)$ for any link diagram $D$. For many diagrams, this inequality is strict, however the following question remains open.
\begin{question}
\label{q1}
Is there a link $L$ such that $g_T(L)<\dalt(L)$?
\end{question}
We now describe a method to construct diagrams where $\dalt(D)-g_T(D)$ is arbitrarily large. Let $x$ be a crossing in a link diagram $D$ considered as a $2$-tangle. An alternating $2$-tangle $\tau$ is said to {\em extend} the crossing $x$, if there is some choice of resolutions of all but one crossing of $\tau$ such that the resulting tangle is isotopic to $x$ through a planar isotopy fixing the endpoints of the tangle. If $D$ is a diagram with crossing $x$ that is extended by the rational tangle $\tau$, then let $D(x,\tau)$ be the diagram obtained by replacing the $2$-tangle $x$ with $\tau$.
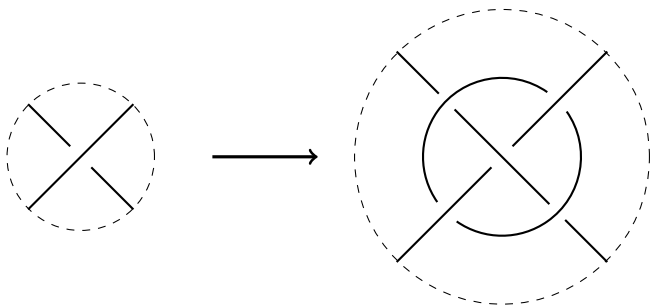
\begin{figure}[h]
$$\begin{tikzpicture}[scale=.7]

\begin{scope}[xshift= -8cm]
\draw [thick] (2,2) -- (4,4);
\draw [thick] (2,4) -- (2.8,3.2);
\draw [thick] (3.2,2.8) -- (4,2);
\draw [dashed] (3,3) circle (1.4cm);

\draw[very thick, ->] (5.5,3) -- (7.5,3);

\end{scope}

\draw[dashed] (3,3) circle (2.8cm);
\draw[thick] (1,1) -- (2.8,2.8);
\draw[thick] (3.2,3.2) -- (5,5);
\draw[thick] (1,5) -- (1.8,4.2);
\draw[thick] (2.1,3.9) -- (3.9,2.1);
\draw[thick] (4.2,1.8) -- (5,1);

\begin{scope}[xshift=3cm, yshift=3cm]

\draw[thick] (55:1.5cm) arc (55:215:1.5cm);
\draw[thick] (35:1.5cm) arc (35: -125:1.5cm);


\end{scope}

\end{tikzpicture}$$
\caption{The crossing to the left is extended by the alternating tangle to the right.}
\label{figure:tangle}
\end{figure}

Let $D$ be a link diagram with $c(D)$ crossings such that $\dalt(D)=k$. Suppose that $x$ is one of the $k$ crossings changed to make $D$ alternating, and suppose that $\tau$ is an alternating tangle with $c(\tau)$ crossings extending $x$. In order to minimally transform $D(x,\tau)$ into an alternating diagram, one must either change the $k-1$ other crossing from above along with every crossing of $\tau$ or one must change every crossing other than the $k$ crossings from above. Hence
$$\dalt(D(x,\tau)) =\min\{c(D)-k, c(\tau)+k-1\}.$$
One can show that $|s_A(D)| + |s_B(D)| - c(D) = |s_A(D(x,\tau))| + |s_B(D(x,\tau))| - c(D(x,\tau))$, and therefore $g_T(D(x,\tau))=g_T(D)$. Hence a suitably chosen sequence of alternating tangle extensions can force the gap between the dealternating number of a diagram and genus of the Turaev surface of the diagram to grow without bound. 

Question \ref{q1} being open means that it is not yet determined whether the dealternating number and the Turaev genus are actually different invariants. It is natural to ask whether Inequality \ref{ineq:TuraevJones} has an analog for the dealternating number.
\begin{question}
\label{q1a}
Is $\dalt(L) \leq c(L) - \Span V_L(t)$ for any link $L$. 
\end{question}
Turaev \cite{Turaev:SimpleProof} shows that $g_T(D)\leq c(D) - \Span V_L(t)$ for any diagram $D$ of a link $L$. However, this approach does not work for the dealternating number because there exists a diagram where $\dalt(D) > c(D) - \Span V_L(t)$. Let $D$ be the usual diagram $(5,-3,2)$ pretzel knot, and let $K(5,-3,2)$ be the knot with diagram $D$. Then $K(5,-3,2)$ is knot $10_{125}$ in the Rolfsen table. We have that $\dalt(D)=3$ while $c(D)=10$ and $\Span V_{K(5,-3,2)}(t) = 8$. Thus $\dalt(D) = 3 > 2 = c(D) - \Span V_{K(5,-3,2)}(t)$. Kim and Lee \cite{KimLee:Pretzel} prove that every non-alternating pretzel link has dealternating number one, and hence $K(5,-3,2)$ has a some diagram (not $D$) implying that $\dalt(K(5,-3,2))=1$.

By examining the behavior of Whitehead doubles, we saw that the Turaev genus dominates the alternation number of a link. However, the following question remains open.
\begin{question}
\label{q2}
Are the alternation number and the Turaev genus of a link comparable?
\end{question}
If the alternation number and Turaev genus of a link are comparable, then we must have $\alt(L)\leq g_T(L)$ for any link $L$. If they are not comparable, then there exists a link $L$ such that $g_T(L) < \alt(L)$, and hence Inequality \ref{eq::altdalt} would imply an affirmative answer to Question \ref{q1}.

Inequality \ref{ineq:warpdalt} follows from the fact that $\warp(D)\leq \dalt(D)$ for every link diagram $D$. Since Turaev genus dominates the warping span, the following question is natural. 
\begin{question}
\label{q3}
Are the warping span and the Turaev genus of a link comparable?
\end{question}
If warping span and Turaev genus are comparable, then $\warp(L)\leq g_T(L)$ for any link $L$. However, this cannot be proved in the same way as Inequality \ref{ineq:warpdalt} because there exist diagrams $D$ with $\warp(D)>g_T(D)$. For example, let $D$ be the diagram of the unknot obtained by taking the closure of the $2$-braid $\sigma_1^{-1}\sigma_1\sigma_1^{-1}\sigma_1\sigma_1^{-1}$. A straightforward computation shows that $\warp(D)=2$ while $g_T(D)=1$. 

If the warping span and Turaev genus are not comparable, then there exists a link $L$ with $g_T(L) < \warp(L)$. In order to confirm such an example, it would be useful to find lower bounds for the warping span of a link. The case of alternating genus is similar. Proposition \ref{prop:hyperbolic} gives an obstruction for certain links to have alternating genus one, but beyond that there is no useful lower bound for the alternating genus of a link. This leads naturally to our final question.
\begin{question}
Do links with arbitrarily large alternating genus or warping span exist?
\end{question}

\bibliography{linklit1}{}
\bibliographystyle {amsalpha}

\end{document}